\numberwithin{equation}{section}
\newtheorem{theorem}{Theorem}[section]
\newtheorem{lemma}{Lemma}[section]
\newtheorem{definition}{Definition}[section]
\newtheorem{remark}{Remark}[section]
\newtheorem{example}{Example}[section]
\newtheorem{proposition}{Proposition}[section]
\newtheorem{thm}{Theorem}[section]
\newenvironment{proof}[1][Proof]{\par\noindent\textbf{#1.} }
{\hfill~$\square$\\\medskip}
\def\aa{\boldsymbol{\alpha}}
\def\bbe{{\mathbb E}}
\def\bbn{{\mathbb N}}
\def\cala{{\cal A}}
\def\var{\mathop{\rm Var}}
\def\Var{\mathop{\rm Var}}
\numberwithin{equation}{section} \allowdisplaybreaks
\begin{document}
\title{A Central Limit Theorem for the Length of the Longest Common 
	Subsequences in Random Words}
\author{Christian Houdr\'e\footnote{School of Mathematics, Georgia
Institute of Technology, Atlanta, Georgia, 30332-0160, USA, 
houdre@math.gatech.edu. }  \and \"{U}mit I\c{s}lak\footnote{
Bogazici University, Faculty of Arts and Science, Department of
Mathematics, 34342, Bebek-Istanbul, Turkey, umitislak@gmail.com. 
}		
} \maketitle \vspace{0.5cm}

\begin{abstract}
Let $(X_i)_{i \geq 1}$ and $(Y_i)_{i\geq1}$ be two independent
sequences of independent identically distributed (iid) random variables
taking their values in a common finite alphabet and having the same law. 
Let $LC_n$ be the length of the longest common subsequences of the two 
random words $X_1\cdots X_n$ and $Y_1\cdots Y_n$. Under a lower bound assumption  
on the order of its variance, $LC_n$ is shown to satisfy a central 
limit theorem.   This is in contrast to the limiting distribution of 
the length of the longest common subsequences in two independent uniform
random permutations of $\{1, \dots, n\}$, which is shown to be the
Tracy-Widom distribution.
\footnotetext{ 
\noindent
Keywords:  Longest Common Subsequences, Random Words, Central Limit Theorem,
Optimal Alignments, Last Passage Percolation, Stein's Method, Ulam's Problem, Random Permutations, 
Tracy-Widom Distribution, Edit/Levenshtein Distance, Supersequences.
\newline\indent
MSC 2010: 05A05, 60C05, 60F05, 60F10. }
\end{abstract}

\section{Introduction}

We explore here the asymptotic behavior, in law, of the length of the
longest common subsequences of two random words.  Although the study of this length is 
decade-old, and 
extensive from an algorithmic point of view, in various
disciplines such as, computer science, bioinformatics, or  
statistical physics, its mathematically rigorous results are rather sparse.  Below, we obtain the first 
result on the limiting law of this length, when properly centered and scaled.

To begin with, let us present our
framework.  Throughout, let $X=(X_i)_{i\ge 1}$ and $Y=(Y_i)_{i\ge 1}$ be two
infinite sequences whose coordinates take their values in
$\cala_m\!=\!\{\!\aa_1, \aa_2, \dots, \aa_m\!\}$, a finite alphabet
of size $m$.  Next, let $LC_n$ be the length of the longest common subsequences (LCSs) of 
the random words $X_1\cdots X_n$ and $Y_1\cdots Y_n$, i.e., $LC_n$ is the maximal 
integer $k\in \{1,\dots, n\}$, such that there exist $1\le i_1<
\cdots < i_k\le n$ and $1\le j_1<\cdots <j_k\le n$, for which:
$$
X_{i_s}=Y_{j_s}, \quad \quad {\rm for \quad all} \quad \quad
s=1,2,\dots, k.
$$

\noindent As well known, $LC_n$ is a measure of the similarity/dissimilarity of the two
words/strings which is often used in pattern matching, e.g., in computer science the 
edit (or Levenshtein) distance is 
the minimal number of indels (insertions/deletions) to transform one
string into the other and is therefore given by $2(n-LC_n)$.  (The reader will find in 
\cite{Cap}, \cite{Pevzner}, \cite{SK} and \cite{W2} 
numerous examples of the relevance of longest common subsequences in various
applications.) 

The asymptotic study of $LC_n$ began with the well known
result of Chv\'atal and Sankoff \cite{CS} asserting, via a superadditivity argument,  that 

\begin{equation}\label{lcmean}
\lim_{n \to \infty}\frac{\mathbb{E}LC_n}{n}= \gamma_m^*, 
\end{equation}
whenever, for example, $(X_i)_{i\ge 1}$ and $(Y_i)_{i\ge 1}$ are two independent
sequences of independent identically distributed (iid) random variables
having the same law.   

However, to this day, the exact value of $\gamma_m^*=\sup_{n\ge 1}\mathbb{E}LC_n/n$ 
(which depends on the distribution of $X_1$ and on the size of the alphabet) is 
unknown, even in "simple cases", such as for uniform Bernoulli
random variables.  Nevertheless, its asymptotic behavior, as the
alphabet size grows, is known (see Kiwi, Loebl and Matou$\breve{\rm s}$ek (\cite{KLM})) 
and given, for $X_1$ uniformly distributed, by:
\begin{equation}\label{gammaasymp}
\lim_{m \to \infty}\sqrt m\gamma_m^* = 2.
\end{equation}

Chv\'atal and Sankoff's law of large numbers was further sharpened by
Alexander (\cite{A}) who proved that
\begin{equation}\label{alex}
\gamma_m^*n -K_A\sqrt{n \ln n} \le \mathbb{E}LC_n \le \gamma_m^*n,
\end{equation}
where $K_A>0$ is a universal constant (which depends neither on $n$ nor
on the distribution of $X_1$).  Next, Steele \cite{St} obtained via the Efron--Stein inequality the first
upper bound on the variance of $LC_n$ proving, in particular, 
that: 
\begin{equation}\label{varup}
\Var LC_n\le \left(1-\sum_{k=1}^m p_k^2\right)n,
\end{equation}  
where $p_k = \mathbb{P}(X_1 = \alpha_k), k = 1, \dots, m$. 
However, finding the order of the lower bound is much more 
illusive and remains unknown in many instances, in particular for iid uniform Bernoulli 
random variables.   Some of the instances in which, and methods for which, a 
variance lower bound matching the linear upper bound have been obtained are further described below.  
Before doing so, let us state our main result:

\begin{thm}
\label{thm: CLT} Let $(X_i)_{i \geq 1}$ and $(Y_i)_{i \ge 1}$ be two
independent sequences of iid random variables with values in
$\cala_m=\{\aa_1,\aa_2, \dots, \aa_m\}$ and having the same law.  
Assume that 
$\Var LC_n \ge Kn$, for some positive constant $K$ independent of $n\ge 1$.   
Let $0< \eta < 1/10$, then for all $n\geq 1$, 
\begin{equation}\label{bound:Wasserstein}
d_W\left(\frac{LC_n - \mathbb{E}LC_n}{\sqrt{\Var LC_n}},
\mathcal{G}\right) \leq C\frac{1}{n^{\frac{1}{10}-\eta}},
\end{equation}
where  $d_W$ is the
Monge-Kantorovich-Wasserstein distance, where $\mathcal{G}$ a standard 
normal random variable and where $C>0$ is a constant depending on $K$, on $m$, and on 
the distribution of $X_1$, but is independent of
$n$.
\end{thm}

Recall next that the Kolmogorov and 
Monge-Kantorovich-Wasserstein distances, $d_K$ and $d_W$, between
two probability distributions $\mu_1$ and $\mu_2$ on $\mathbb{R}$,
are respectively defined as
$$d_K(\mu_1,\mu_2) = \sup_{h \in \mathcal{H}_1}
\left|\int h d\mu_1-\int h d\mu_2\right|,$$
where $\mathcal{H}_1=\{\mathbf{1}_{(-\infty, x]}: x\in
\mathbb{R}\}$, and
$$d_W(\mu_1,\mu_2)= \sup_{h \in \mathcal{H}_2}\left|\int h
d\mu_1  -\int h  d\mu_2 \right|,$$ where $\mathcal{H}_2=\{h :
\mathbb{R} \rightarrow \mathbb{R} : |h(x)-h(y)|\leq|x-y|\}$. Recall, further, that if
$\mu_2$ is absolutely continuous, with respect to the Lebesgue measure,  
and with density $\mu_2(dx)/dx$ essentially bounded, i.e., such that $\|\mu_2(dx)/dx\|_\infty < +\infty$,
then, 
\begin{equation}\label{KolmWassrelation}
	d_K(\mu_1,\mu_2) \leq \sqrt{2 \|\mu_2(dx)/dx\|_\infty d_W(\mu_1,\mu_2)},
\end{equation}
e.g., see Ross~\cite{ross} or the Appendix in \cite{AH}.  Thus, Theorem~\ref{thm:
	CLT} implies via \eqref{KolmWassrelation}, that 
\begin{equation}\label{bound:Kolmogorov}
	d_K\left(\frac{LC_n - \mathbb{E}LC_n}{\sqrt{\Var LC_n}},
	\mathcal{G}\right) \leq C^{1/2} \left(\frac{2}{\pi}\right)^{1/4}
	\frac{1}{n^{\frac{1}{20}-\frac{\eta}{2}}},
\end{equation}
and so, properly centered and normalized, $LC_n$ converges in distribution 
to a standard normal random variable as long as $\var LC_n$ is assumed to be of linear order.  

\

Let us carefully review and discuss the assumption on the variance of $LC_n$ present 
in the statement of our main theorem.  
As indicated in  \eqref{varup}, $\Var LC_n \le n$, however contradictory conjectures 
on the order of  this variance have also appeared in the literature: 
A sub-linear conjecture (of order $o(n^{2/3})$) 
in \cite{CS} and a linear one in Waterman~\cite{W} (see also \cite{A}).  
The linear order, which we believe to be the correct one, has been verified in a 
few situations that we briefly describe next:  

$\bf \bullet$ This linear lower bound is proved in \cite{LM} or \cite{HMa}  for iid random variables (Bernoulli or finite-alphabet ones) which 
are highly biased, in that a single letter is taken with very high (but fixed) probability.  In that case, changing in any configuration, a 
low probability letters into the high probability one, is more likely to increase $LC_n$ by one unit 
than to decrease it by one unit.
  This change (which clearly has
no effect for uniformly distributed letters) reduces variability and the new longest common subsequences provide the variance lower bound.  

$\bf \bullet$ Beyond the strongly biased 
cases just mentioned, a linear order for the variance 
has been obtained in other situations closer to the iid uniform case.  In particular, in a 
framework where either a letter is missing or long blocks 
are added within the iid uniform framework or in various other settings, as seen 
in the many references given in (\cite{AHM}, \cite{BM}, \cite{GHL}, \cite{HL}, \cite{HM}, \dots). .
Within these frameworks, modifications of the tools presented in our current  
approach would also lead to a 
central limit theorem, without any further assumption on the variance.   In all 
these situations, the central $r$-th, $r\ge1 $, moments of $LC_n$ can also 
be shown to be of order $n^{r/2}$ (see the concluding remarks in
\cite{HMa}). This last fact might hint at the asymptotic normality of
$LC_n$, although similar moments estimates can 
lead to a non-Gaussian limiting law in a related problem, i.e., in the study of  
$LCI_n$, the length of the longest common and increasing 
subsequences of two random words, over a totally ordered finite alphabet (see \cite{BH}, \cite{DH}).

$\bf \bullet$
Early extensive simulations (with $n$ of order $10^4$) 
by Boutet de Monvel~\cite{BdM} seemed to indicate, in the uniform case, 
a variance of order at least $n^{2\omega^\prime}$ with $\omega' \approx 0.418$ and 
even a normal  asymptotic law.  More recent extensive simulations (with $n$ of order 
$10^6$) (see \cite{LH}) seem to indicate  (in both the uniform and non-uniform binary cases) that 
the variance is of order $n$ as the lengths of the 
sequences studied there are the larger to date, an order one-hundred times bigger than the ones in \cite{BdM}.  

$\bf \bullet$ As it will become 
clear from the proof of the theorem just stated, 
a mere sublinear lower bound on the variance will also lead to a normal 
limiting law, e.g., a lower bound 
of order at least $n^{9/10 + \eta}$, $\eta > 0$ will do  
(although, and again, it is our belief that the variance of 
$LC_n$ is linear in $n$, but nevertheless $9/10  > 2\omega^\prime$).  
Note also that the proof of this theorem provides for $\alpha$ (to be defined) 
such that $4/5 < \alpha < 1$, a rate of 
$1/n^{{(1-\alpha)}/{2}}$, while for $2/3 < \alpha < 4/5$, a different rate,
of order $1/n^{1-3(1-\alpha/2)/2}$, 
can be obtained  in a similar way (see \eqref{Lastvarbound}), 
under a linear variance lower bound.

\begin{remark}\label{rmk:variance}
Theorem~\ref{thm: CLT} is the first of its kind.  It contrasts, in
particular, with the corresponding result in the related Bernoulli matching problem where, 
as shown by Majumdar and 
Nechaev (\cite{MN}), the
limiting law is the Tracy-Widom one.  Both the
LCS and Bernoulli matching models are directed last passage vertex/site percolation 
models with respectively dependent and independent weights, possibly
explaining the different limiting laws.  In both cases, the
expectation is linear in $n$, but the variance in the Bernoulli
matching problem is sublinear (of order $n^{2/3}$), while in our LCS
case it is assumed linear.   Let us describe how the LCS problem can be represented as a directed 
last passage percolation (LPP) problem with dependent weights.  
Indeed, let the set of vertices  be 
\[V:=\{0,1,2,\ldots,n\}\times \{0,1,2,\dots,n\},\] 
and let the set of oriented edges ${\cal E}\subset V\times V$ 
contain horizontal, vertical and diagonal edges.  
The horizontal edges are oriented to the right, 
while the vertical edges are oriented upwards, both having unit length. 
The diagonal edges point up-right at a $\pi/4$-angle and have length $\sqrt{2}$.  
Hence, 
\[{\cal E} :=\left\{ (v,v+e_1),(v,v+e_2),(v,v+e_3): v\in V\right\},\]
where $e_1:=(1,0)$, $e_2:=(0,1)$ and $e_3:=(1,1)$.  
With the horizontal and vertical edges, we associate 
a weight of $0$.  With the diagonal edge from $(i,j)$ to $(i+1,j+1)$ we associate 
the weight $1$ if $X_{i+1}=Y_{j+1}$ and $0$ (or $-\infty$) otherwise.  
In this manner, we obtain that $LC_n$  
is equal to the total weight of the heaviest paths going from $(0,0)$ to $(n,n)$.  
(Another directed LPP representation can be obtained 
via $LC_n = \max_{\pi \in SI}\sum_{(i,j)\in \pi}{\bf 1}_{\{X_i = Y_j\}}$, 
where $SI$ refers to the set of all paths with \textit{strictly} increasing steps, i.e., paths 
with \textit{both} coordinates strictly increasing from a step to another, from $(0,0)$ to 
the East, $x=n$, or North, $y=n$, boundary.  A third representation would be as above 
but where now the paths 
going from $(0,0)$ to $(n,n)$ have either strictly increasing steps 
or North or East unit steps.  
Again to the strictly 
increasing steps the associated weight is ${\bf 1}_{\{X_i = Y_j\}}$ 
while to the North as well as to the East unit steps is associated a weight value of $0$.  
As a final representation one could still proceed with strictly increasing paths but 
with the requirement that one ends the paths with a $1$.)   
Note that the weights in our percolation representations are not ``truly 2-dimensional'' 
and, in our opinion, this could be a further 
reason for the order of magnitude of the mean, variance as well 
as the limiting law in the LCS problem to be different from 
other first/last passage-related models. 

Theorem~\ref{thm: CLT} further contrasts with the
corresponding limiting law for the length of the longest common 
subsequences in a pair of independent uniform random permutations of
$\{1, \dots, n\}$.  Indeed, in sequence comparison problems, the emergence of the
Tracy--Widom distribution has sometimes been contemplated/speculated, e.g., see \cite{AD}. We show, in the last section of the present paper, that this is correct when analyzing the asymptotic behavior of the length of the longest common subsequences of two 
independent uniform random permutations of $\{1, \dots, n\}$ (the expectation there is of order $\sqrt n$ and the
variance of order $n^{1/3}$).  

Finally, let us remark that some of the ideas/techniques developed to prove lower bounds on 
$Var LC_n$ have been further developed in the context of first passage percolation, providing, to date, the best lower bound available  on the variance of the passage time (see \cite{DHHX}).

\end{remark}

As far as the content of the paper is concerned, the lengthy next section
contains the proof of Theorem~\ref{thm: CLT}, which is preceded 
by a discussion of  some elements of its proof.  Then, in the third section,
various extensions and generalizations as well as some related open
questions are discussed.  In particular, the proof, that the length
of longest common subsequences in two independent uniform random permutations
of $\{1, \dots, n\}$ converges to the Tracy-Widom distribution, is included there.

\section{Proof of Theorem~\ref{thm: CLT}}

The aim of this section is to provide a proof of the main theorem by 
a three-step method.  The first step makes use of a relatively recent theorem of 
Chatterjee (\cite{chat1}) on Stein's method (see \cite{cgs} for an
overview of the method, including Chatterjee's normal approximation
results via exchangeable pairs); the second uses simple moment 
estimates for $LC_n$ derived from our 
lower bound variance assumption; and 
the third develops lengthy correlation estimates based, in part, on short 
string-lengths genericity results obtained in \cite{HMdiag}. We  
start by fixing notation and recalling some preliminaries.

Throughout this section,  $X=(X_i)_{i\ge 1}$ and $Y=(Y_i)_{i\ge 1}$
are two independent sequences whose coordinates are iid, with a common law,  
and taking their values in $\cala_m=\{\aa_1,\aa_2, \dots, \aa_m\}$, a 
finite alphabet of size $m$.  

Let us continue by introducing some more notation following those of
\cite{chat1}. Let $W=(W_1,W_2,\dots,W_n)$ and
$W'=(W_1',W_2',...,W_n')$ be two iid 
$\mathbb{R}^n$-valued random vectors whose components are also 
independent.  For $A \subset [n]:=\{1,2,\dots,n\}$, define the 
random vector $W^A$ by setting 
\[ W_i^A = \begin{cases}
      W_i' & \textrm{ if $i \in A$} \\
      W_i & \textrm{ if $i \notin A$}, \\
   \end{cases} \]
with for $A=\{j\}$, and further ease of notation, $W^j$ is short for $W^{\{j\}}$, while $W^\emptyset = W$.  

For a given Borel measurable function $f : \mathbb{R}^n \rightarrow
\mathbb{R}$ and $A \subset [n]$, let
$$T_A:=\sum_{j \notin A} \Delta_j f(W) \Delta_j f(W^A),$$ where
\begin{equation*}
    \Delta_jf(W):= f(W)-f(W^j), 
\end{equation*}
and again, $T_\emptyset =\sum_{j =1}^n (\Delta_j f(W) )^2$.  Finally, let
\begin{equation*}
    T =\frac{1}{2} \sum_{A \subsetneq [n]}  \frac{T_A}{\binom{n}{|A|}
    (n-|A|)},
\end{equation*}
where $|A|$ denotes the cardinality of $A$, and where the sum above is taken over 
all the proper subsets (including $T_\emptyset$) of $[n]$.   
Here is Chatterjee's result.

\begin{theorem}\label{chat1} \cite{chat1}
Let all the terms be defined as above, and let $0<\sigma^2:= \Var
f(W) < \infty$. Then,
\begin{equation}\label{chatbound}
    d_W\left(\frac{f(W) - \mathbb{E}f(W)}{\sqrt{\Var f(W)}},\mathcal{G}\right)
    \le \frac{\sqrt{\Var T}}{\sigma^2} +
    \frac{1}{2\sigma^3}\sum_{j=1}^n \mathbb{E}|\Delta_jf(W)|^3,
\end{equation}
where $\mathcal{G}$ is a standard normal random variable.
\end{theorem}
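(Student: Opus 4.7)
The plan is to prove this via Stein's method for the Wasserstein distance, with the main ingredient being a swap-based identity that expresses covariances of $f(W)$ through the quantity $T$. The argument has three steps: (i) establish the identity $\mathbb{E}[(f(W)-\mathbb{E}f(W))\psi(f(W))] = \mathbb{E}[T\,\psi'(f(W))] + R$ for smooth test functions $\psi$, with explicit cubic-moment control on $R$; (ii) specialize to $\psi(x)=x$ to deduce $\mathbb{E}T=\sigma^2$; (iii) plug in the Stein-equation solution and close the bound.

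For step (i), I would start from the telescoping identity $f(W)-f(W') = \sum_{k=1}^n \Delta_{\pi(k)} f(W^{A_{k-1}})$, valid for any permutation $\pi$ of $[n]$ with $A_k = \{\pi(1),\dots,\pi(k)\}$, which follows since $W^{A_0}=W$ and $W^{A_n}=W'$. The independence of $W$ and $W'$ together with $\mathbb{E}f(W')=\mathbb{E}f(W)$ rewrite $\mathbb{E}[(f(W)-\mathbb{E}f(W))\psi(f(W))]$ as $\mathbb{E}[(f(W)-f(W'))\psi(f(W))]$; averaging over a uniformly random $\pi$ independent of $(W,W')$ then gives $\sum_{a=0}^{n-1}\mathbb{E}[\Delta_j f(W^A)\psi(f(W))]$, where for each $a$ the pair $(A,j)$ is uniform on $\{|A|=a,\ j\notin A\}$, producing exactly the prefactor $1/[\binom{n}{|A|}(n-|A|)]$ that appears in $T$. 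The next step is the distribution-preserving swap $W_j\leftrightarrow W_j'$, under which $W\mapsto W^j$ and $W^A\mapsto W^{A\cup\{j\}}$, so $\Delta_j f(W^A)\mapsto -\Delta_j f(W^A)$ and $\psi(f(W))\mapsto \psi(f(W^j))$, yielding the symmetrization $2\mathbb{E}[\Delta_j f(W^A)\psi(f(W))] = \mathbb{E}[\Delta_j f(W^A)(\psi(f(W))-\psi(f(W^j)))]$. A Taylor expansion $\psi(f(W))-\psi(f(W^j)) = \psi'(f(W))\Delta_j f(W) - E$ with $|E|\le \frac{1}{2}\|\psi''\|_\infty(\Delta_j f(W))^2$, combined with the distributional equality $\Delta_j f(W^A)\stackrel{d}{=}\Delta_j f(W)$ (from the iid structure) and a H\"older bound on $\mathbb{E}[|\Delta_j f(W^A)|(\Delta_j f(W))^2]$ by $\mathbb{E}|\Delta_j f(W)|^3$, then yields the identity with remainder $|R|\le \frac{1}{4}\|\psi''\|_\infty\sum_j\mathbb{E}|\Delta_j f(W)|^3$, using the combinatorial simplification $\binom{n-1}{a}/[\binom{n}{a}(n-a)] = 1/n$ summed over $a\in\{0,\dots,n-1\}$.

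For step (ii), applying the identity with the affine $\psi(x)=x$ (so $\psi''\equiv 0$ and $R=0$) gives $\sigma^2 = \mathbb{E}[(f(W)-\mathbb{E}f(W))f(W)] = \mathbb{E}T$. For step (iii), I let $\phi=\phi_h$ solve Stein's equation $\phi'(x)-x\phi(x)=h(x)-\mathbb{E}h(\mathcal{G})$ for a $1$-Lipschitz $h$; the standard Stein bounds give $\|\phi'\|_\infty\le 1$ and $\|\phi''\|_\infty\le 2$. Writing $W_*=(f(W)-\mathbb{E}f(W))/\sigma$ and applying the step-(i) identity to $\psi(x) = \phi((x-\mathbb{E}f(W))/\sigma)$, for which $\|\psi''\|_\infty\le 2/\sigma^2$, one obtains $\mathbb{E}[W_*\phi(W_*)] = \sigma^{-2}\mathbb{E}[T\phi'(W_*)] + R'$ with $|R'|\le (2\sigma^3)^{-1}\sum_j\mathbb{E}|\Delta_j f(W)|^3$. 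Substituting into the Stein equation, $\mathbb{E}h(W_*)-\mathbb{E}h(\mathcal{G}) = \mathbb{E}[\phi'(W_*)(1-T/\sigma^2)] - R'$; since $\mathbb{E}T=\sigma^2$ and $\|\phi'\|_\infty\le 1$, Cauchy--Schwarz yields $\mathbb{E}|1-T/\sigma^2|\le \sqrt{\Var T}/\sigma^2$, and taking the supremum over $1$-Lipschitz $h$ produces the claimed Wasserstein bound~\eqref{chatbound}.

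The main obstacle is the combinatorial bookkeeping in step (i): one must verify that the weights arising from uniform averaging over both the random permutation $\pi$ and the swap-symmetry reproduce exactly the prefactors appearing in $T$, and that the aggregated Taylor remainder collapses, via $\binom{n-1}{a}/[\binom{n}{a}(n-a)]=1/n$ summed over $a$, to the clean third-moment sum $\sum_j\mathbb{E}|\Delta_j f(W)|^3$ rather than anything larger. Once the identity is in hand, the Stein-equation closure is routine.
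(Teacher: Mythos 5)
This theorem is quoted from Chatterjee's paper \cite{chat1} and is not proved in the present paper, so there is no internal proof to compare against; your reconstruction is correct and follows essentially Chatterjee's original argument (random-permutation telescoping, the $W_j\leftrightarrow W_j'$ swap symmetrization, Taylor expansion, and the Stein-equation closure with $\|\phi'\|_\infty\le 1$, $\|\phi''\|_\infty\le 2$). Note that your final step, bounding $|\mathbb{E}[\phi'(W_*)(1-T/\sigma^2)]|$ by $\|\phi'\|_\infty\,\mathbb{E}|1-T/\sigma^2|\le \sqrt{\Var T}/\sigma^2$ rather than first projecting $T$ onto $\sigma(f(W))$, yields exactly the slightly weaker $\sqrt{\Var T}$ form stated here — which is the point of Remark 2.1 in the paper.
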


\begin{remark} (i) In \cite{chat1}, the variance term as displayed in \eqref{chatbound} 
is actually replaced by $\Var \mathbb{E}(T|f(W))$ but the above bound, with the
larger $\Var T$, already presented in \cite{chat1}, is enough for our
purpose.

(ii)  Our proof bounds the right-hand side of \eqref{chatbound} and next, using  
\eqref{KolmWassrelation}, bounds the corresponding Kolmogorov distance.   
An alternate way to obtain convergence 
in distribution would be to first use a  more recent result 
of Lachi\`eze-Rey and Peccati \cite{LRP}, directly bounding the Kolmogorov distance, which could then 
be estimated by adapting the techniques 
presented below.

\end{remark}

Two small comments are in order before beginning the proof of Theorem~\ref{thm: CLT}.

%\bigskip
\begin{itemize}
  \item[(1)]
In the proof, we do not keep  track of the constants since doing so 
would make the arguments a lot lengthier.  Therefore, a constant $C$
may vary from an expression to another.  Note, however, that $C$ will 
always be positive and independent of $n$. 
\item[(2)] We do not worry about having quantities (e.g. length of longest 
  common subsequences of two random words) like $n^{\alpha}, \ln n, etc.$ which 
  should actually be $\lfloor n^{\alpha}\rfloor$, $\lfloor \ln
n\rfloor$, etc. This does not cause any problems as we are interested in
asymptotic bounds. The proof can be revised with minor changes (and
some further notational burden) to make the statements more precise.  
\end{itemize}
Let us start with a sketch of proof Theorem~\ref{thm: CLT} and to do so, set 

\begin{equation}\label{Wdefinition}
    W:=(X_1,\dots, X_n,Y_1,\dots, Y_n),
\end{equation}
and set 
$$f(W):=LC_n(X_1\cdots X_n;Y_1\cdots Y_n).$$
We begin by estimating the second term on the right-hand side of
\eqref{chatbound}.  To do so, recall our assumption:  
%, and to do so, recall Theorem~1.1 of \cite{HMa}.

%\begin{theorem} \cite{HMa}
%Let the hypotheses of Theorem~\ref{thm: CLT} hold, and let $1 \leq r
%< \infty$.  Then, there exists a constant $C > 0$ depending on $r,
%m, p_{j_0}$ and $\max_{j \ne j_0} p_j$, such that, for all $n \geq
%1$,
\begin{equation}\label{HMavarestimate}
\sigma^2: = \mathbb{E}(LC_n- \mathbb{E}LC_n)^2 \geq K n. 
\end{equation}
%\end{theorem}
%where $K>0$ is a constant independent of $n$.  
%The estimate  \eqref{HMavarestimate} with $r=2$, gives 
% $$\sigma^2 = \Var LC_n  \geq Cn, \qquad n\geq 1,$$ where $C$ is a
% constant independent of $n$.  

\noindent
Therefore,
\begin{equation}\label{momentlowerbound}
    \sigma^3 \geq C n^{3/2}, \qquad n\geq 1,
\end{equation}
yielding
\begin{equation}\label{est:2ndterm}
\frac{1}{2\sigma^3}\sum_{j=1}^{2n} \mathbb{E}|\Delta_jf(W)|^3 \leq C
\frac{1}{\sqrt{n}},
\end{equation}
since $|\Delta_jf(W)| \leq 1$.   This last estimate takes care of the second term on the right-hand 
side of \eqref{chatbound}.  

Next, let us move to the estimation of the variance term in 
\eqref{chatbound}. Setting \begin{equation}\label{def:S}
    \mathcal{S}_1 := \{(A,B,j,k) : A \subsetneq [2n], B \subsetneq [2n],j \notin A, k \notin
    B\},
\end{equation} $\Var T$ can be expressed as \begin{eqnarray}\label{varbasicexpansion}
\nonumber  \Var T  &=& \frac{1}{4} \Var \left(\sum_{A \subsetneq
[2n]} \sum_{j \notin A}
\frac{\Delta_j f(W) \Delta_j f(W^A)}{\binom{2n}{|A|} (2n-|A|)} \right) \\
\nonumber    &=& \frac{1}{4}\! \sum_{A \subsetneq [2n],j \notin A}
\sum_{B \subsetneq [2n],k \notin B}
    \frac{Cov(\Delta_j f(W) \Delta_jf(W^A), \Delta_k f(W) \Delta_kf(W^B))}{\binom{2n}{|A|}
    (2n-|A|)\binom{2n}{|B|} (2n-|B|)}\\
   &=& \frac{1}{4} \sum_{(A,B,j,k) \in \mathcal{S}_1} \frac{Cov(\Delta_j f(W) \Delta_jf(W^A),
   \Delta_k f(W) \Delta_kf(W^B))}{\binom{2n}{|A|} (2n-|A|)\binom{2n}{|B|}
   (2n-|B|)}.
\end{eqnarray}

\begin{comment}
Observe that
\begin{eqnarray}%\label{varbasicexpansion}
% \nonumber to remove numbering (before each equation)
  \Var T &=& \frac{1}{4} \Var \left(\sum_{A \subsetneq [2n]} \sum_{j \notin A} \frac{\Delta_j f(W)
  \Delta_j f(W^A)}{\binom{2n}{|A|} (2n-|A|)} \right) \nonumber\\
    &=& \frac{1}{4} \sum_{A \subsetneq [2n],j \notin A} \sum_{B \subsetneq [2n],k \notin B}
    \frac{Cov(\Delta_j f(W) \Delta_jf(W^A), \Delta_k f(W) \Delta_kf(W^B))}{\binom{2n}{|A|}
    (2n-|A|)\binom{2n}{|B|} (2n-|B|)} \nonumber\\
   &=& \frac{1}{4} \sum_{(A,B,j,k) \in \mathcal{S}}
   \frac{Cov(\Delta_j f(W) \Delta_jf(W^A), \Delta_k f(W) \Delta_kf(W^B))}{\binom{2n}{|A|}
   (2n-|A|)\binom{2n}{|B|}
   (2n-|B|)}, \label{varbasiceqn}
\end{eqnarray}
 where we  
\end{comment}

Our strategy is now to further divide $\mathcal{S}_1$ into two main pieces  by conditioning on a, yet to be defined, high probability 
event $E^n_{\epsilon, s_1, s_2}$, ensuring that LCSs are made of an accumulation of relatively short strings.  
More precisely, 
\begin{lemma}\label{decompS1} Let $Z= {\bf 1}_{E^n_{\epsilon, s_1,s_2}}$, then 
\begin{eqnarray}	
4\Var T  
	&=&\!\!\!\!\!\! \!\!\!\!\!\!\!\sum_{(A,B,j,k) \in \mathcal{S}_1} \!\!\!\!\!\!\!\!\frac{Cov(\Delta_j f(W) \Delta_jf(W^A),
		\Delta_k f(W) \Delta_kf(W^B)\!)}{\binom{2n}{|A|} (2n-|A|)\binom{2n}{|B|}
		(2n-|B|)} \mathbb{P}(\!Z=0)\\
	&+&\!\!\!\!\!\!\!\!\!\!\!\!\!\!\sum_{(A,B,j,k) \in \mathcal{S}_1} \!\!\!\!\!\!\!\!\frac{Cov(\Delta_j f(W) \Delta_jf(W^A),
		\Delta_k f(W) \Delta_kf(W^B)\!)}{\binom{2n}{|A|} (2n-|A|)\binom{2n}{|B|}
		(2n-|B|)} \mathbb{P}(\!Z=1)\!. 
\end{eqnarray} 
\end{lemma}

To estimate each of the two terms in the above lemma, the following proposition, and a conditional version of it, which
easily follows from similar arguments, will be used repeatedly
throughout the proof.
\begin{proposition}\label{prop:combcomp}
Let $\mathcal{R}$ be a subset of $[2n]^2$, and let
$$\mathcal{S}^* = \{(A,B,j,k) : A \subsetneq [2n], B \subsetneq
[2n],j \notin A, k \notin
    B, (j,k) \in \mathcal{R}\}.$$  Let $g : \mathcal{S}^* \rightarrow \mathbb{R}$
    with $\|g\|_{\infty} < +\infty$,
    then
$$\sum_{(A,B,j,k)\in \mathcal{S}^*}  \left|\frac{g(A,B,j,k)}{\binom{2n}{|A|}
(2n-|A|) \binom{2n}{|B|} (2n-|B|)}\right| \leq \|g\|_{\infty} |\mathcal{R}|.$$
\end{proposition}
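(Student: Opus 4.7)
The plan is straightforward: bound $|g| \le C$ pointwise and reduce the statement to a purely combinatorial identity. Once the constant is pulled out of the sum, I would organize the remaining expression so that for each fixed $(j,k) \in \mathcal{R}$ the sum over $A,B$ factors, since the constraints $A \subsetneqq [2n],\, j \notin A$ and $B \subsetneqq [2n],\, k \notin B$ are completely decoupled. This gives
$$\sum_{(A,B,j,k)\in \mathcal{S}^*} \frac{|g(A,B,j,k)|}{\binom{2n}{|A|}(2n-|A|)\binom{2n}{|B|}(2n-|B|)} \;\le\; C \sum_{(j,k)\in \mathcal{R}} \Sigma_j \,\Sigma_k,$$
where
$$\Sigma_j := \sum_{A \subsetneqq [2n],\, j \notin A} \frac{1}{\binom{2n}{|A|}(2n-|A|)},$$
and $\Sigma_k$ is defined analogously.

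Next I would compute $\Sigma_j$ explicitly by stratifying on the cardinality $|A|=a$. Since $j \notin A$ automatically forces $A \ne [2n]$, the condition $A \subsetneqq [2n]$ is redundant, and the number of admissible sets of size $a$ is just $\binom{2n-1}{a}$. Using the elementary identity $\binom{2n-1}{a}/\binom{2n}{a} = (2n-a)/(2n)$, I obtain
$$\Sigma_j \;=\; \sum_{a=0}^{2n-1} \frac{\binom{2n-1}{a}}{\binom{2n}{a}(2n-a)} \;=\; \sum_{a=0}^{2n-1} \frac{2n-a}{2n(2n-a)} \;=\; \sum_{a=0}^{2n-1}\frac{1}{2n} \;=\; 1.$$
By symmetry $\Sigma_k = 1$ as well, so the whole right-hand side collapses to $C\,|\mathcal{R}|$, which is precisely the claim.

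I do not anticipate any real obstacle here, since the entire content of the proposition is the two-line binomial calculation above. The one point worth being careful about is that the strict inclusion $A \subsetneqq [2n]$ is already enforced by $j \notin A$ (which is what makes the cardinality $a$ range exactly over $\{0,1,\dots,2n-1\}$ and yields the clean cancellation). The conditional version of the statement that is alluded to, and used later in the paper, will follow by running the same computation inside the relevant conditional expectation, since $\Sigma_j$ and $\Sigma_k$ are deterministic quantities.
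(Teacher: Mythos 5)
Your proof is correct and follows essentially the same route as the paper: bound $|g|$ by $C$, reorganize the sum over $(j,k)\in\mathcal{R}$, stratify by $|A|$ and $|B|$, and use $\binom{2n-1}{a}\big/\bigl(\binom{2n}{a}(2n-a)\bigr)=1/(2n)$ to make each inner sum equal to $1$. Factoring the inner sum as $\Sigma_j\Sigma_k$ is only a cosmetic repackaging of the paper's double sum, so there is nothing substantive to add.
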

\begin{proof}
First,  since $\|g\|_{\infty} < +\infty$,
\begin{eqnarray*}
\lefteqn{\sum_{(A,B,j,k)\in \mathcal{S}^*}  \left|
\frac{g(A,B,j,k)}{\binom{2n}{|A|} (2n-|A|)\binom{2n}{|B|}
(2n-|B|)}\right|}  \\& \leq&
 \|g\|_\infty \sum_{(A,B,j,k)\in \mathcal{S}^*}
   \left( \frac{1}{\binom{2n}{|A|} (2n-|A|)\binom{2n}{|B|} (2n-|B|)}\right).
\end{eqnarray*}
Next, expressing $\sum_{(A,B,j,k)\in \mathcal{S}^*}$ in terms of
$\mathcal{R}$, using basic results about binomial coefficients and
performing some elementary manipulations lead to
\begin{eqnarray*}
% \nonumber to remove numbering (before each equation)
\lefteqn{\sum_{(A,B,j,k)\in \mathcal{S}^*} \frac{1}{\binom{2n}{|A|}
(2n-|A|)\binom{2n}{|B|}
(2n-|B|)}} \\
&=& \sum_{(j,k)\in \mathcal{R}} \sum_{\substack{A \subsetneq [2n] :
A \not \ni j  \\ B \subsetneq [2n] : B \not \ni k }}
\frac{1}{\binom{2n}{|A|}
(2n-|A|)\binom{2n}{|B|} (2n-|B|)} \\
&=& \sum_{(j,k)\in \mathcal{R}} \left(\sum_{s,r=0}^{2n-1}
\sum_{\substack{A \not \ni j, |A|=s\\ B \not \ni k, |B|=r}}
\frac{1}{\binom{2n}{|A|} (2n-|A|)\binom{2n}{|B|} (2n-|B|)}\right) \\
&=& \sum_{(j,k)\in \mathcal{R}} \left(\sum_{s,r=0}^{2n-1}
\sum_{\substack{A \not \ni j, |A|=s\\ B \not \ni k, |B|=r}}
\frac{1}{\binom{2n}{s} (2n-s)\binom{2n}{r} (2n-r)}\right) \\
&=& \sum_{(j,k)\in \mathcal{R}} \left(\sum_{s,r=0}^{2n-1}
\frac{\binom{2n-1}{s}\binom{2n-1}{r}}{\binom{2n}{s}
(2n-s)\binom{2n}{r} (2n-r)}\right) \\
&=&\sum_{(j,k)\in \mathcal{R}} \left(\sum_{s,r=0}^{2n-1}
\frac{\frac{(2n-1)!}{(2n-1-s)!s!}\frac{(2n-1)!}{(2n-1-r)!r!}}{\frac{(2n)!}{(2n-s)!s!}
	(2n-s)\frac{(2n)!}{(2n-r)!r!} (2n-r)}\right) \\
&=&  \sum_{(j,k)\in \mathcal{R}}
\left(\sum_{s,r=0}^{2n-1} \frac{1}{(2n)^2} \right) \\
&=& |\mathcal{R}|,
\end{eqnarray*}
from which the result follows.
\end{proof}
Taking $\mathcal{R}=[2n]^2$, $g(A,B,j,k) = Cov(\Delta_j f(W)
\Delta_jf(W^A), \Delta_k f(W) \Delta_kf(W^B))$ which is such that $\|g\|_{\infty} \le 1$, Proposition~\ref{prop:combcomp} 
yields the estimate
\begin{equation}\label{MasterEq}
\sum_{(A,B,j,k)\in \mathcal{S}_1} \left(\frac{Cov(\Delta_j f(W)
\Delta_jf(W^A), \Delta_k f(W) \Delta_kf(W^B))}{\binom{2n}{|A|}
(2n-|A|)\binom{2n}{|B|} (2n-|B|)}\right) \leq 4n^2.
\end{equation}
Hence, $\Var T \le n^2$ giving a suboptimal result 
for our purposes, and we therefore begin a detailed 
estimation study to improve the variance upper bound to $o(n^2)$.

To do so, we start by giving a slight variation of a result from
\cite{HMdiag} which can be viewed as a microscopic short-lengths 
genericity principle, and which will turn out to be an important
tool in our proof.  This principle, valid not only for common
sequences but in much greater generality (see \cite{HMdiag}), should
prove useful in other contexts.

Assume that $n=vd$, and let the integers
\begin{equation}\label{optcond1}
    r_0=0\leq r_1 \leq r_2\leq r_3\leq ...\leq r_{d-1}\leq r_d=n,
\end{equation}
be such that
\begin{equation}\label{optcond2}
LC_n=\sum_{i=1}^d |LCS(X_{v(i-1)+1}X_{v(i-1)+2} \cdots
X_{vi};Y_{r_{i-1}+1}Y_{r_{i-1}+2} \cdots Y_{r_i})|,
\end{equation}
where $|LCS(X_{v(i-1)+1}X_{v(i-1)+2} \cdots
X_{vi};Y_{r_{i-1}+1}Y_{r_{i-1}+2} \cdots Y_{r_i})|$ is the length of
the longest common subsequences of the words/strings 
$X_{v(i-1)+1}X_{v(i-1)+2} \cdots X_{vi}$ and
$Y_{r_{i-1}+1}Y_{r_{i-1}+2} \cdots Y_{r_i}$ (with the understanding
that this length is zero if none of the letters of the $X$-part are aligned with letters of the 
$Y$-part, i.e., if the $X$-part is only aligned with gaps).
%The main result of \cite{HMdiag} tells that for $k$ large but fixed,
%and $n$ large enough, any generic optimal alignment is such that
%\marginpar{optimal alignment not defined yet!} most of the intervals
%$[r_{i-1}+1,r_i]$ are close in length to $v$.
Next, let $\epsilon >0$ and let $0<s_1<1<s_2$, be two reals such
that
$$\tilde{\gamma}(s_1)<\tilde{\gamma}(1)=\gamma_m^* \quad \text{and}
\quad   \tilde{\gamma}(s_2)<\tilde{\gamma}(1)=\gamma_m^*,$$ where
$$\tilde{\gamma}(s)= \lim_{n \rightarrow \infty}
\frac{\mathbb{E}LC_n(X_1\cdots X_n;Y_1 \cdots  Y_{sn})}{n(1+s)/2},
\quad s>0.$$
%Let $E_{\epsilon,p_1,p_2}^n$ be the event that a proportion of no
%less than $1-\epsilon$ of the intervals $[r_{i-1}+1,r_i],
%i=1,2,...,d$, have their length between $kp_1$ and $kp_2$ for any
%optimal alignment of $X_1 \cdots X_n$ and $Y_1 \cdots Y_n$.
 %To be more precise,
(See \cite{HMdiag} for the existence of, and estimates on, $s_1$ and
$s_2$.) 

Finally, let $E_{\epsilon,s_1,s_2}^n$ be the event that for
all integer vectors $(r_0,r_1,...,r_d)$ satisfying \eqref{optcond1}
and \eqref{optcond2}, we have
\begin{equation}\label{optcond3}
    |\{i \in [d] : vs_1 \leq r_i - r_{i-1} \leq v s_2\}| \geq
    (1-\epsilon) d.
\end{equation}

In words, $E^n_{\epsilon,p_1,p_2}$ 
is the (random) set of optimal alignments 
of $X_1\cdots X_n$ and $Y_1\cdots Y_n$ satisfying \eqref{optcond1}, 
for which a proportion of at least $1-\epsilon$ 
of the integer intervals $[r_{i-1}+1,r_{i}]_{\bbn}$, $i=1,2,\dots,d$, 
have their length between $vs_1$ and $vs_2$.  

As stated next, $E^n_{\epsilon,s_1,s_2}$ holds with high probability.  
Broadly, our next theorem asserts that for any $\epsilon > 0$, 
there exists 
$v$ large enough, but fixed, such that if $X$ is divided into segments of length $v$ then,  
typically (at least a fraction $1-\epsilon$ of segments), and with high probability, 
the LCSs match these segments to segments of similar length in $Y$.

\begin{theorem}\label{thm: HMdiag} \cite{HMdiag}
Let $\epsilon >0$.  Let $0<s_1<1<s_2$ be such that
$\tilde{\gamma}(s_1)<\tilde{\gamma}(1)=\gamma_m^*$ and
$\tilde{\gamma}(s_2)<\tilde{\gamma}(1)=\gamma_m^*$, and let $\delta
\in (0, \min(\gamma_m^*-\tilde{\gamma}(s_1),
\gamma_m^*-\tilde{\gamma}(s_2)))$.  Let the integer $v$ be such that 
\begin{equation}\label{HMdiagcond}
\frac{1+\ln\,(1+v)}{v} \le \frac{\delta^2 \epsilon^2}{16}.
\end{equation}
Then,
\begin{equation}\label{HMdiagbound}
\mathbb{P}(E_{\epsilon,s_1,s_2}^n) \geq 1 -
\exp\left(-n\left(-\frac{1 +\ln\,(1+v)}{v} +\frac{\delta^2
\epsilon^2}{16} \right) \right),
\end{equation}
for all $n=n(\epsilon, \delta)$ large enough.
\end{theorem}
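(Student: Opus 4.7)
The plan is to union-bound over all admissible block-partitions and combine with McDiarmid concentration applied to the sum of block-LCSs. With $d=n/v$, any vector $(r_0,\dots,r_d)$ satisfying \eqref{optcond1} is encoded by the composition $w_i = r_i - r_{i-1} \ge 0$ of $n$ into $d$ non-negative parts, so the number of such tuples is $\binom{n+d-1}{d-1} \le (e(v+1))^{d}$, whose logarithm is at most $n(1+\log(v+1))/v$. I call a partition \emph{bad} if fewer than $(1-\epsilon)d$ of the $w_i$ lie in $[vs_1,vs_2]$. The goal is to show that no bad partition can realize the maximum in \eqref{optcond2}, except on an event of the prescribed exponentially small probability.

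For a fixed partition, set $L_i = |LCS(X_{v(i-1)+1}\cdots X_{vi}; Y_{r_{i-1}+1}\cdots Y_{r_i})|$ and $S = \sum_{i=1}^{d} L_i$. By the superadditivity of $\mathbb{E}|LCS(X_1\cdots X_n; Y_1\cdots Y_{sn})|$ and Fekete's lemma, $\mathbb{E} L_i \le \tilde\gamma(w_i/v)(v+w_i)/2$ for every $v$ and $w_i$. Concavity of $\tilde\gamma$ (established in \cite{HMdiag}), together with $\tilde\gamma(s_1), \tilde\gamma(s_2) < \gamma_m^*$, yields $\tilde\gamma(w_i/v) \le \gamma_m^* - \delta$ whenever $w_i/v \notin [s_1, s_2]$; the two sides of the complement are handled separately, using whichever pointwise gap exceeds $\delta$ (which is why the hypothesis involves the $\max$, and why it suffices to focus on the side containing at least $\epsilon d/2$ of the bad indices). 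Summing the bad-index contributions gives
\[
\mathbb{E} S \le \gamma_m^* n - \frac{\delta\epsilon n}{2}.
\]

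For concentration, I view $S$ as a function of the $2n$ i.i.d.\ letters $(X_1,\dots,X_n, Y_1,\dots,Y_n)$; flipping any single letter affects exactly one $L_i$ and by at most $1$, so McDiarmid's inequality gives $\mathbb{P}(S \ge \mathbb{E} S + t) \le \exp(-t^2/n)$. Taking $t = \delta\epsilon n/4$ yields $\mathbb{P}(S \ge \gamma_m^* n - \delta\epsilon n/4) \le \exp(-\delta^2\epsilon^2 n/16)$ for each bad partition, and a union bound over the at most $(e(v+1))^{n/v}$ bad partitions produces
\[
\mathbb{P}\!\left(\exists\ \text{bad}\ (r_0,\dots,r_d)\ \text{with}\ S \ge \gamma_m^* n - \tfrac{\delta\epsilon n}{4}\right) \le \exp\!\left(-n\!\left(\tfrac{\delta^2\epsilon^2}{16} - \tfrac{1+\log(v+1)}{v}\right)\right),
\]
matching the target bound. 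To close the argument, Alexander's estimate \eqref{alex} combined with McDiarmid applied to $LC_n$ itself forces $LC_n \ge \gamma_m^* n - \delta\epsilon n/4$ with overwhelming probability for $n$ large; on that event, any partition realizing the maximum in \eqref{optcond2} satisfies $S = LC_n \ge \gamma_m^* n - \delta\epsilon n/4$, and hence by the previous display cannot be bad.

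The main obstacle is the mean-step: converting the two pointwise inequalities $\tilde\gamma(s_i) < \gamma_m^*$ into a uniform gap $\gamma_m^* - \delta$ for every $s \notin [s_1, s_2]$ requires concavity of $\tilde\gamma$ and careful handling of which side of $[s_1,s_2]$ contains the majority of the bad indices. Making the Fekete bound genuinely non-asymptotic at finite $v$ and $w_i$ (and the accompanying integer/rounding issues around $d = n/v$) introduces some bookkeeping but is routine; the remaining ingredients, bounded-differences concentration and union bounds, are entirely standard.
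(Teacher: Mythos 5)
Your overall skeleton is exactly the blueprint the paper attributes to \cite{HMdiag}: a union bound over the at most $\binom{n+d}{d}\le (e(v+1))^{d}$ weakly increasing vectors $(r_0,\dots,r_d)$ (cf.\ \eqref{newbound}), the superadditivity/Fekete bound $\mathbb{E}L_i\le\tilde{\gamma}(w_i/v)(v+w_i)/2$, bounded-differences concentration, and Alexander's estimate \eqref{alex} to keep $LC_n$ near $\gamma_m^*n$; the paper itself omits the proof and only records the modified counting step, so at the level of strategy you are aligned with it. The genuine gap is in the mean-deficit step. Under the stated hypothesis $\delta<\max(\gamma_m^*-\tilde{\gamma}(s_1),\gamma_m^*-\tilde{\gamma}(s_2))$, $\delta$ may exceed one of the two gaps, say $\gamma_m^*-\tilde{\gamma}(s_2)$; for bad indices with $w_i>vs_2$ unimodality only yields $\tilde{\gamma}(w_i/v)\le\tilde{\gamma}(s_2)$, a per-index deficit strictly smaller than $\delta$. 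Your remedy, ``focus on the side containing at least $\epsilon d/2$ of the bad indices,'' does not close this: nothing forces the majority side to be the side whose gap exceeds $\delta$ (the constraint $\sum_i w_i=n$ can be balanced by good indices with $w_i\in[vs_1,v)$, so heavy high-side badness need not be accompanied by low-side badness). Hence the display $\mathbb{E}S\le\gamma_m^*n-\delta\epsilon n/2$ does not follow as written; you need $\delta$ below \emph{both} gaps, or the slack device indicated in Remark~\ref{rmk:HMdiag}, where a true gap $\delta^*>\delta$ enters the mean bound and only $\delta$ survives in the exponent. Moreover, even in the favorable case, working with only $\epsilon d/2$ indices halves the deficit and turns the exponent $\delta^2\epsilon^2/16$ into $\delta^2\epsilon^2/64$, so the stated constant would not be recovered.

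A second, smaller issue is the final assembly: your failure probability is the sum of the union-bound term $\exp\left(-n\left(\delta^2\epsilon^2/16-(1+\log\,(v+1))/v\right)\right)$ and the separate term $\mathbb{P}(LC_n<\gamma_m^*n-\delta\epsilon n/4)$ coming from Alexander plus McDiarmid, whereas \eqref{HMdiagbound} claims the first term alone. Absorbing the second term is precisely where the slack $\delta^*-\delta$, the relation between $\epsilon$ and $C_A\sqrt{n\log n}$ spelled out in Remark~\ref{rmk:HMdiag}, and the proviso ``for all $n=n(\epsilon,\delta)$ large enough'' are used; asserting that the bounds ``match'' skips this bookkeeping. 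The remaining ingredients you use are sound: the count of weakly increasing vectors, the bound $\mathbb{E}L_i\le\tilde{\gamma}(w_i/v)(v+w_i)/2$ along rays, the fact that $S(\mathbf{r})\le LC_n$ for every admissible $\mathbf{r}$ with equality exactly when $\mathbf{r}$ realizes \eqref{optcond2}, and the one-letter bounded-difference property for a fixed partition.
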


\begin{remark}\label{rmk:existence}
In \cite{HMdiag},  instead of \eqref{optcond1}, the corresponding condition is:
\begin{equation}\label{optcondHM}
    r_0=0< r_1 < r_2< r_3< ...< r_{d-1}< r_d=n.
\end{equation}
which is made up of strict inequalities becoming weak inequalities in  \eqref{optcond1}.  
The rationale for this difference is that, in general, there is no guarantee that there exists an
optimal alignment, i.e., a longest common subsequence, satisfying both 
conditions \eqref{optcond2} and
\eqref{optcondHM}.  Indeed, for a simple counterexample, let $n=4$,
$\mathcal{A}=[2]$, $d=v=2$, and let $$X=(1,1,0,0), \qquad
Y=(0,0,1,1).$$  Then, any optimal alignment satisfying
\eqref{optcond2} must have a piece (soon to be called a ''cell") with no terms 
in the $Y$-part and this is clearly incompatible with \eqref{optcondHM}. (This
counterexample can easily be extended to $n=6$, $\mathcal{A}=[2]$,
$d=3, v=2$, letting $X=(1,1,0,0,1,1)$, $Y=(0,0,1,1,0,0)$, and so
on.)

In general, there always exists an optimal alignment 
$(r_0,r_1,r_2,...,r_d)$ satisfying both \eqref{optcond1} and
\eqref{optcond2} with, say, $v=n^{\alpha}$, $0 < \alpha < 1$, as above. 
(Consider any
one of the longest common subsequences and choose the $r_i$'s so
that these two conditions are satisfied.) Therefore, we slightly
change the framework of \cite{HMdiag} as forthcoming arguments 
require the existence of an optimal alignment satisfying \eqref{optcond2}
for any value of $X$ and $Y$.  However, the proof of
Theorem~\ref{thm: HMdiag}, above, proceeds as the proof of the corresponding
result (Theorem 2.2!) in \cite{HMdiag}, and is therefore omitted. (The only
difference is that counting the cases of equality, an upper estimate
on the number of integer-vectors $(0=r_0, r_1, \dots, r_{d-1}, r_d
=n)$ satisfying \eqref{optcond1} is now given by
\begin{equation}\label{newbound}
 \binom {n+d}{d} \le \frac{(n+d)^d}{d!}\le \left(\frac{e(n+d)}{d}\right)^d 
 = (e(1+v))^d,
\end{equation}
leading to the terms involving $\ln\,(1+v)$ rather than just
$\ln\,v$ \cite{HMdiag}, when using \eqref{optcondHM} and the estimate $n^d/d! \le (ev)^d$ to upper-bound
$\binom{n}{d}$.)
\end{remark}

\begin{remark}\label{rmk:HMdiag} In \cite{HMdiag}, the statement corresponding to 
Theorem~\ref{thm: HMdiag} is given for "all $n$ large enough".  
However, as indicated at the end of the proof there, it is possible
to find a more quantitative estimate using Alexander's result 
\eqref{alex}. In fact a lower bound, in terms of
$\epsilon$ and $\delta$, is valid for all $n\ge 1$. Indeed, at first, from the
end of the proof of Lemma 3.1 there, preceding the main theorem in \cite{HMdiag}, one can
easily verify that the following relation between $n$ and $\epsilon$ is sufficient for 
\eqref{HMdiagbound} to hold:  
$$\frac{4K_A^2}{(\delta^*-\delta)^2} \frac{\ln n}{n} \le \epsilon^2,$$
where $0 < \delta < \delta^*:= \min(\gamma_m^*-\tilde{\gamma}(s_1),
\gamma_m^*-\tilde{\gamma}(s_2))$ is a fixed positive quantity and $K_A$ is a
positive constant such that $\gamma_m^* n - K_A\sqrt{n \ln n} \leq
\mathbb{E}LC_n.$ (One can find explicit numerical estimates on $K_A$
using Rhee's \cite{rhee} proof of \eqref{alex}.)

In our context, here is how to choose $\epsilon$ so that the
estimate in \eqref{HMdiagbound} holds true for all $n \geq 1$ and
$v=n^{\alpha}$, $0 < \alpha <1$. Let $c_1>0$ be a constant such that
$$c_1^2\geq \frac{32}{\delta^2},$$ and
$$c_1^2 \left(\frac{1+ \ln\,(1 + n^\alpha)}{n^{\alpha}} \right) \geq \frac{4K_A^2}
{(\delta^*-\delta)^2} \frac{\ln n}{n}, \qquad \text{for all} \; n
\geq 1.$$ Setting, $$\epsilon^2 =
c_1^2\frac{1+\ln\,(1+n^{\alpha})}{n^{\alpha}},$$ \eqref{HMdiagcond}
holds for $v=n^{\alpha}$ and therefore,
\begin{equation}\label{Zequals0probest}
\mathbb{P}(E_{\epsilon,s_1,s_2}^n) \ge 1 - e^{-n^{1-\alpha
}(1+\ln\,(1 + n^\alpha))}\ge 1-e^{-(1+\ln2)},
\end{equation}
for all $n\ge 1$.
\end{remark}

Let us return to the proof of Theorem~\ref{thm: CLT}, and the
estimation of \eqref{varbasicexpansion}.  First, for notational
convenience, below we write $\sum_{\mathcal{S}_1}$ in place of $\Sigma_{(A,B,j,k)
\in \mathcal{S}_1}$. Also, for random variables $U, V$ and a random
variable $Z$ taking its values in $R \subset \mathbb{R}$, and with
another abuse of notation, we write $Cov_{Z=z} (U,V)$ for
$\mathbb{E}((U-\mathbb{E}U)(V-\mathbb{E}V)|Z=z)$, $z\in R$.

Let, now, the random variable $Z$ be the indicator function of the
event $E_{\epsilon,s_1,s_2}^n$, where $\epsilon = c_1\sqrt{(1 +
\ln\,(1+v)) / v}$, i.e., let $Z={\bf 1}_{E_{\epsilon,s_1,s_2}^n}$, with
$v = n^{\alpha}$ and with $c_1$ as in Remark~\ref{rmk:HMdiag}. 
Then, we arrive at the decomposition of 
Lemma~\ref{decompS1}
\begin{eqnarray}\label{firstcov}
% \nonumber to remove numbering (before each equation)
  \sum_{\mathcal{S}_1}\lefteqn{\frac{Cov(\Delta_j f(W) \Delta_jf(W^A),
  \Delta_k f(W) \Delta_kf(W^B))}{\binom{2n}{|A|} (2n-|A|)\binom{2n}{|B|} (2n-|B|)}}
  \nonumber\\
  &=&\!\!\!\!\!\sum_{\mathcal{S}_1} \frac{Cov_{Z=0}(\Delta_j f(W) \Delta_jf(W^A),
  \Delta_k f(W) \Delta_kf(W^B))}{\binom{2n}{|A|} (2n-|A|)\binom{2n}{|B|} 
  (2n-|B|)} \mathbb{P}(Z=0)
  \nonumber\\
  &+& \!\!\!\!\!\sum_{\mathcal{S}_1}\! \frac{Cov_{Z=1}(\Delta_j f(W) \Delta_jf(W^A\!), \Delta_k f(W\!)
  \Delta_kf(W^B)\!)}{\binom{2n}{|A|} (2n-|A|)\binom{2n}{|B|} (2n-|B|)} \mathbb{P}(\!Z=1\!).
\end{eqnarray}
To estimate the first term on the right-hand side of
\eqref{firstcov}, first note that $Cov_{Z=0}(\Delta_j f(W)
\Delta_jf(W^A), \Delta_k f(W) \Delta_kf(W^B)) \le 1$, which when
combined with the estimate in \eqref{Zequals0probest} and
\eqref{MasterEq}, immediately leads to
\begin{eqnarray}\label{est:sum20}
\nonumber \sum_{\mathcal{S}_1} \frac{Cov_{Z=0}(\Delta_j f(W) \Delta_jf(W^A),
\Delta_k f(W) \Delta_kf(W^B))}{\binom{2n}{|A|}
(2n-|A|)\binom{2n}{|B|} (2n-|B|)}
    \mathbb{P}(Z=0) && \\ \quad \quad \quad \quad \le  4n^2 e^{-n^{1-\alpha
}(1+\ln\,(1+n^\alpha))}.
\end{eqnarray}
For the second term on the right-hand side of \eqref{firstcov},
begin with the trivial bound on $\mathbb{P}(Z=1)$ to get
\begin{eqnarray}\label{est:sum1}
\sum_{\mathcal{S}_1} \frac{Cov_{Z=1}(\Delta_j f(W) \Delta_jf(W^A),
\Delta_k f(W) \Delta_kf(W^B))}{\binom{2n}{|A|}
(2n-|A|)\binom{2n}{|B|} (2n-|B|)}
   \mathbb{P}(Z=1) && \nonumber \\
   \leq \sum_{\mathcal{S}_1}
\frac{Cov_{Z=1}(\Delta_j f(W) \Delta_jf(W^A), \Delta_k f(W)
\Delta_kf(W^B))}{\binom{2n}{|A|} (2n-|A|)\binom{2n}{|B|} (2n-|B|)}.
&&
\end{eqnarray}

Finer decompositions are then needed to handle this last summation,
and for this purpose, we specify an optimal alignment with certain
properties.

Recall from Remark~\ref{rmk:existence} that there always exists an
optimal alignment $\mathbf{r}=(r_0,r_1,r_2,...,r_d)$ satisfying both
\eqref{optcond1} and \eqref{optcond2} with $v=n^{\alpha}$, $0 < \alpha < 1$.
  In the sequel, $\mathbf{r}$ denotes such a (fixed) optimal
alignment which also specifies the pairs, in the strings $X_1\cdots X_n$ and
$Y_1\cdots Y_n$, contributing to the longest common subsequence.\footnote{This alignment might change according to different 
realizations of the words $X_1\cdots X_n$ and $Y_1\cdots Y_n$, i.e., it is random.  But for each realization, and although there are possibly more than 
one choice of optimal alignments, we just fix one of those.} Such an
alignment always exists, as just noted, and so we can define an
injective map from $(X_1\cdots X_n,Y_1\cdots Y_n)$ to the set of alignments, making 
various definitions (such as the ones for $\mathcal{S}_{1,1}$ and $\mathcal{S}_{1,2}$, below) 
well defined. 
%on page \pageref{s11s12}) below well defined.    
This abstract
construction is enough for our purposes, since the argument below is
independent of the choice of the alignment.  Note also that
conditionally on the event $\{Z=1\}$, $\mathbf{r}$ satisfies
\eqref{optcond3}.

To continue, we need another definition and some more notation.

\begin{definition}
For the optimal alignment $\mathbf{r}$, each of the sets
$$
\{X_{v(i-1)+1}X_{v(i-1)+2} \cdots
 X_{vi};Y_{r_{i-1}+1}Y_{r_{i-1}+2} \cdots Y_{r_i}\}, \qquad i=1,...,d,
$$
is called a \emph{cell} of $\mathbf{r}$.
\end{definition}

In particular, any optimal alignment with
$v=n^{\alpha}$ has $d=n^{1-\alpha}$ cells.

Let us next introduce some more notation which will be used below. For any given
$j\in [2n]$, let $P_j$ be the cell containing $W_j$ where, again,
$W=(W_1, \dots, W_{2n})=(X_1, \dots, X_n,Y_1, \dots, Y_n)$.  We
write $P_j=(P_j^1;P_j^2)$ where $P_j^1$ (resp.~$P_j^2$) is the
subword of $X$ (resp.~$Y$) corresponding to $P_j$. Note that, for
each $j \in [2n]$, $P_j^1$ contains $n^{\alpha}$ letters but that
$P_j^2$ might be empty, as the following example shows:

\begin{example}
Let $n=12$ and $\mathcal{A}= [3]$, and let 
$$X=(1,1,2,1,2,1,1,2,1,1,3,1),$$
$$Y=(2,1,1,3,2,3,1,2,1,1,1,2).$$
and $W=(X,Y)$.  Then, $LC_{12}=8$, obtained for example through $(1,1,2,1,2,1,1,1)$, while choosing $v=3$, the number of
cells in the optimal alignment is $d=4$. One possible choice for
these cells is
$$(X_1 X_2 X_3; Y_1 Y_2 Y_3 Y_4 Y_5) =(1 1 2;2 1 1 3 2),$$
$$(X_4 X_5 X_6; \emptyset) =(1 2 1; \emptyset),$$
$$(X_7 X_8 X_9; Y_6 Y_7 Y_8 Y_9) =(1 2 1;3 1 2 1),$$ and
$$(X_{10} X_{11} X_{12}; Y_{10} Y_{11} Y_{12}) =(1 3 1;1 1 2).$$
For example, focusing on $W_8=X_8$, we have $$P_8 = (P_8^1;P_8^2)
=(1 2 1;3 1 2 1).$$ 
\end{example}

Returning to the proof of Theorem~\ref{thm: CLT}, define the
following subsets of $\mathcal{S}_1$ with respect to the alignment
$\mathbf{r}$:
%that satisfies (\ref{optcond1}), (\ref{optcond2}) and the uniqueness
%condition (\ref{uniqcond}):
$$\mathcal{S}_{1,1} = \{(A,B,j,k) \in \mathcal{S}_1 : W_j \; \text{and} \; W_k \;
\text{are in the same cell of $\mathbf{r}$}\},$$ and
$$\mathcal{S}_{1,2} = \{(A,B,j,k) \in \mathcal{S}_1 : W_j \; \text{and} \; W_k \;
\text{are in different cells of $\mathbf{r}$}\}.$$ \label{s11s12}
Clearly,
$\mathcal{S}_{1,1}\cap \mathcal{S}_{1,2} = \emptyset$ and
$\mathcal{S}_1 = \mathcal{S}_{1,1} \cup \mathcal{S}_{1,2}$. Now, for
a given subset $\mathcal{S}$ of $\mathcal{S}_1$, and for
$(A,B,j,k)\in \mathcal{S}_1$, define
$Cov_{Z=1,(A,B,j,k),\mathcal{S}}$ to be
$$Cov_{Z=1,(A,B,j,k),\mathcal{S}}(X,Y) = \mathbb{E}\left((X-\mathbb{E}X)
(Y-\mathbb{E}Y) \mathbf{1}_{(A,B,j,k) \in \mathcal{S}} | Z=1\right),$$
and, moreover, write $Cov_{Z=1,\mathcal{S}}(X,Y)$ instead of
$Cov_{Z=1,(A,B,j,k),\mathcal{S}}(X,Y)$ when the value of $(A,B,j,k)$
is clear from the context.

Continuing with the decomposition of the right-hand side of
\eqref{est:sum1}, 
\begin{eqnarray}\label{est:S22}
% \nonumber to remove numbering (before each equation)
\nonumber \lefteqn{\sum_{\mathcal{S}_1} \frac{Cov_{Z=1}(\Delta_j f(W)
\Delta_jf(W^A), \Delta_k f(W) \Delta_kf(W^B))}{\binom{2n}{|A|}
(2n-|A|)\binom{2n}{|B|} (2n-|B|)}  }\\ &=&  \nonumber  \sum_{\mathcal{S}_1}
\frac{Cov_{Z=1, \mathcal{S}_{1,1}}(\Delta_j f(W) \Delta_jf(W^A),
\Delta_k f(W) \Delta_kf(W^B))}{\binom{2n}{|A|}
(2n-|A|)\binom{2n}{|B|} (2n-|B|)} \\
 &&\!\!\quad + \sum_{\mathcal{S}_1} \frac{Cov_{Z=1, \mathcal{S}_{1,2}}(\Delta_j f(W) \Delta_jf(W^A),
\Delta_k f(W) \Delta_kf(W^B))}{\binom{2n}{|A|}
(2n-|A|)\binom{2n}{|B|} (2n-|B|)}, 
\end{eqnarray}
where to further clarify the notation note that, for example,
\begin{eqnarray}
% \nonumber to remove numbering (before each equation)
\nonumber \lefteqn{\sum_{\mathcal{S}_1} \frac{Cov_{Z=1,
\mathcal{S}_{1,1}}(\Delta_j f(W) \Delta_jf(W^A), \Delta_k f(W)
\Delta_kf(W^B))}{\binom{2n}{|A|}
(2n-|A|)\binom{2n}{|B|} (2n-|B|)}}\\
&& \qquad = \nonumber  \sum_{\mathcal{S}_1} \mathbb{E} \left(\frac{ g(A,B,j,k)
\mathbf{1}_{(A,B,j,k) \in \mathcal{S}_{1,1}}}{\binom{2n}{|A|}
(2n-|A|)\binom{2n}{|B|} (2n-|B|)} \Big| Z=1\right),
\end{eqnarray}
where
\begin{eqnarray}\label{defn:g}
g(A,B,j,k)\!\!\!\!&=&\!\!\!\!\left(\Delta_jf(W) \Delta_jf(W^A) - \mathbb{E}(\Delta_j
f(W)\Delta_jf(W^A))\right) \nonumber \\
 &\quad&  \!\times \!\left(\Delta_k
f(W) \Delta_kf(W^B)-\mathbb{E}(\Delta_k f(W) \Delta_kf(W^B))\right).
\end{eqnarray}

To glimpse into the proof, let us stop for a moment to present some
of its key steps. Our first intention is to show that, thanks to our
conditioning on the event $E_{\epsilon,s_1,s_2}^n$, the number of
terms contained in $\mathcal{S}_{1,1}$ is ``small'',  
%To achieve this
%conclusion, a corollary to Theorem~\ref{thm: HMdiag}, see \eqref{EH} below, is used. 
while a further next step
will be based on estimations for the indices in $\mathcal{S}_{1,2}$.
Here we will observe that, as the letters are in different cells, we have enough independence (see the
decomposition in \eqref{covariancedecompose}) to show that the
contributions of the covariance terms from $\mathcal{S}_{1,2}$ are
``small".

Let us now focus on the first term on the right-hand side of
\eqref{est:S22}.
 Letting $g$ be as in
\eqref{defn:g}, and using arguments similar to those used in the
proof of Proposition~\ref{prop:combcomp}, we have,
\begin{eqnarray}\label{sum11}
\nonumber&&\sum_{\mathcal{S}_1} \frac{\left|Cov_{Z=1,
\mathcal{S}_{1,1}}(\Delta_j f(W) \Delta_jf(W^A), \Delta_k f(W)
\Delta_kf(W^B))\right|}{\binom{2n}{|A|}
(2n-|A|)\binom{2n}{|B|} (2n-|B|)} \\
\nonumber&& \qquad \le \mathbb{E} \left( \sum_{\mathcal{S}_1}
\frac{\left|g(A,B,j,k)\right| \mathbf{1}_{(A,B,j,k) \in
\mathcal{S}_{1,1}}}{\binom{2n}{|A|}(2n-|A|)\binom{2n}{|B|} (2n-|B|)} \Big| Z=1\right) \\
\nonumber && \qquad \le 4 \mathbb{E} \left( \sum_{\mathcal{S}_1}
\frac{\mathbf{1}_{(A,B,j,k) \in \mathcal{S}_{1,1}}}{\binom{2n}{|A|}
(2n-|A|)\binom{2n}{|B|}
(2n-|B|)}  \Big| Z=1\right) \\
&& \qquad = 4\mathbb{E}\left(|\mathcal{R}|\!\left|Z=1\right.\right),
\end{eqnarray}
where $$\mathcal{R} = \{(j,k) \in [2n]^2 : W_j\; \text{and} \; W_k
\; \text{are in the same cell of} \; \mathbf{r}\}.$$

To estimate \eqref{sum11}, for each $i=1, \dots, d$, let
$|\mathcal{R}_i|$ be the number of pairs of indices $(j,k) \in
[2n]^2$ that are in the $i$th-cell, and let ${\cal T}_i$ be the
event that $s_1n^{\alpha} \leq r_i - r_{i-1} \leq s_2 n^{\alpha}$.
Then,
\begin{eqnarray}\label{Ridecomp}
% \nonumber to remove numbering (before each equation)
\!\!\!\!\mathbb{E}\left(|\mathcal{R}| \left|Z=1\right.\right)
&=&  \sum_{i=1}^{n^{1-\alpha}} \mathbb{E}(|\mathcal{R}_i| \left| Z=1\right.) \nonumber  \\
   &=& \sum_{i=1}^{n^{1-\alpha}}\!\mathbb{E}(|\mathcal{R}_i| \mathbf{1}_{{\cal T}_i} \left| Z=1\right.\!)
   + \sum_{i=1}^{n^{1-\alpha}}\! \mathbb{E}(|\mathcal{R}_i| \mathbf{1}_{{\cal T}_i^c}\left|
   Z=1\right.\!).
\end{eqnarray}
For the first term on the right-hand side of \eqref{Ridecomp}, note
that, when ${\cal T}_i$ holds true, the $X$-part of the $i$-th cell can contain at
most $n^{\alpha}$ letters while the $Y$-part can contain at most 
$s_2 n^{\alpha}$ ones.  Thus, 
$$|\mathcal{R}_i| \mathbf{1}_{{\cal T}_i}  \leq s_2 n^{2 \alpha},$$
and this leads to:  
\begin{equation}\label{Ridecompfirst}
    \sum_{i=1}^{n^{1-\alpha}} \mathbb{E}(|\mathcal{R}_i| \mathbf{1}_{{\cal T}_i} \big|
    Z=1) \leq
    s_2 n^{1+\alpha}.
\end{equation}

For the estimation of the second term on the right-hand side of
\eqref{Ridecomp}, we first  observe that  letting  $I := \{i \in [n^{1 - \alpha}] : 
{\cal T}_i \; \text{does not occur}\}$, we have 
$$\sum_{i=1}^{n^{1- \alpha}} \mathbb{E}(|\mathcal{R}_i| \mathbf{1}_{{\cal T}_i^c} \big| Z = 1) 
= \mathbb{E}\left( \sum_{i=1}^{n^{1- \alpha}}  |\mathcal{R}_i| \mathbf{1}_{{\cal T}_i^c} | Z = 1 \right)  
= \mathbb{E}\left( \sum_{i \in I}  |\mathcal{R}_i| \big| Z = 1 \right).$$  
Noting that $|\mathcal{R}_i| 
\leq 4 n^2$, we have 
$$\mathbb{E}\left( \sum_{i \in I}  |\mathcal{R}_i| \big| Z = 1 \right) 
\le 4 n^2 \mathbb{E} \left(\sum_{i\in I} \; 1 \; \big| Z=1 \right) 
\leq 4 n^2 \mathbb{E} \left(|I| \; \big| \; Z=1  \right).$$
Next, by definition, given that $Z = 1$, 
$|I| \le \epsilon n^{1 - \alpha}$ and so 
\begin{equation}\nonumber
\mathbb{E}\left(|I| \; \big| \; Z=1 \right) 
\le \epsilon n^{1 - \alpha} = 
c_1 \left(\frac{1 + \ln(1+ n^{\alpha})}{n^{\alpha}}\right)^{1/2} n^{1 - \alpha}
\leq Cn^{1 - {3 \alpha}/{2}} (\ln n^\alpha)^{1/2}.
\end{equation} 
Thus, we obtain 
$$\sum_{i=1}^{n^{1- \alpha}} \mathbb{E}(|\mathcal{R}_i| \mathbf{1}_{{\cal T}_i^c} \big| Z = 1) \leq 4n^2  \mathbb{E} \left(|I| \; \big| \; Z=1  \right) \leq C n^{3 - 3 \alpha /2} (\ln n^\alpha)^{1/2},$$
and when $\alpha > 2/3$, the above right-hand side is $o(n^2)$.

Hence,  using also \eqref{Ridecompfirst}, it follows that:  
\begin{eqnarray}\label{P22estimate}
\mathbb{E}(|\mathcal{R}|\!\left|Z=1\right.) \le C n^{1+\alpha} + C n^{3 - 3 \alpha /2} (\ln n^\alpha)^{1/2},
\end{eqnarray}
which, in turn, yields via \eqref{sum11},
\begin{eqnarray}\label{cov21estimate}
\sum_{\mathcal{S}_1} \frac{\left|Cov_{Z=1,
\mathcal{S}_{1,1}}(\Delta_j f(W) \Delta_jf(W^A), \Delta_k f(W)
\Delta_kf(W^B))\right|}{\binom{2n}{|A|}
(2n-|A|)\binom{2n}{|B|} (2n-|B|)}  \nonumber \\ 
\qquad \leq Cn^{1+\alpha}  + C n^{3 - 3 \alpha /2} (\ln n^\alpha)^{1/2}.
\end{eqnarray}
This last estimate takes care of the first sum on the 
right-hand side of \eqref{est:S22} as, again, this last right-hand side is  $o(n^2)$, when $2/3<\alpha < 1$. 

\

\noindent
{\it Hence, from here on, we henceforth assume that $\alpha$ is 
a real greater than $2/ 3$ and smaller than $1$.}

\

We move next to estimating the second term on the right-hand
side of \eqref{est:S22}, which is given by:  
$$\sum_{\mathcal{S}_1} \frac{Cov_{Z=1, \mathcal{S}_{1,2}}(\Delta_j f(W) \Delta_jf(W^A),
\Delta_k f(W) \Delta_kf(W^B))}{\binom{2n}{|A|}
(2n-|A|)\binom{2n}{|B|} (2n-|B|)}.$$ 
To estimate the summands in the above 
expression, we  decompose the covariance terms in such a
way that (conditional) independence of certain random variables occurs, therefore
simplifying the estimates themselves.  For this purpose, for each $i \in [2n]$,
%we let $J_i$ be the cell of $\mathbf{r}$
%that corresponds to the cell of $\mathbf{r}$ containing $W_i$. Note
%that by definition the cells $J_i$ are disjoint and therefore the
%random variables included in different cells are independent. Also we
let $f(P_i)=LC(P_i)$ be the length of the longest common subsequences
of $P_i^1$ and $P_i^2$, the coordinates of the cell
$P_i=(P_i^1;P_i^2)$.  Now, set
$$\tilde{\Delta}_if(W) := f(P_i) - f(P_i'),$$
where $P_i'$ is the same as $P_i$ except that $W_i$ is now replaced
with the independent copy $W_i'$.  In words, $\tilde{\Delta}_if(W)$
is the difference between the length of the longest common
subsequences of the two random words forming $P_i$, and the length of their modified versions at
coordinate $i$, i.e., the words forming $P_i'$.  Now for $(A,B,j,k) \in \mathcal{S}_1$,
\begin{eqnarray}\label{covariancedecompose}
&&\!\!\!\!\!\!\!Cov_{Z=1,
\mathcal{S}_{1,2}}(\Delta_jf(W)\Delta_jf(W^A),
\Delta_kf(W)\Delta_kf(W^B)) = \nonumber \\
&&\! Cov_{Z=1,
\mathcal{S}_{1,2}}((\Delta_jf(W)-\tilde{\Delta}_jf(W))
\Delta_jf(W^A),\Delta_kf(W)\Delta_kf(W^B)) \nonumber \\
&&\!\!\!+ Cov_{Z=1,
\mathcal{S}_{1,2}}(\tilde{\Delta}_jf(W)(\Delta_jf(W^A)-
\tilde{\tilde{\Delta}}_jf(W^A)),
\Delta_kf(W)\Delta_kf(W^B))
\nonumber   \\
&&\!\!\!+ Cov_{Z=1,
\mathcal{S}_{1,2}}(\tilde{\Delta}_jf(W)\tilde{\tilde{\Delta}}_jf(W^A),
(\Delta_kf(W)-\tilde{\Delta}_kf(W))\Delta_kf(W^B)) \nonumber \\
&&\!\!\!+ Cov_{Z=1,
\mathcal{S}_{1,2}}(\tilde{\Delta}_jf(W)\tilde{\tilde{\Delta}}_jf(W^A\!),\tilde{\Delta}_kf(W)
(\Delta_kf(W^B)-\tilde{\tilde{\Delta}}_kf(W^B))  \nonumber \\
&&\!\!\!+ Cov_{Z=1,
\mathcal{S}_{1,2}}(\tilde{\Delta}_jf(W)\tilde{\tilde{\Delta}}_jf(W^A),
\tilde{\Delta}_kf(W)
\tilde{\tilde{\Delta}}_kf(W^B)), 
\end{eqnarray}
where, for any $i\notin A$, we also set $\tilde{\tilde{\Delta}}_if(W^A) 
= f(W^A \big|_{P_i}) - f(W^{A\cup\{i\}} \big|_{P_i})$, 
with $W^A \big|_{P_i}$ and $W^{A\cup\{i\}} \big|_{P_i}$ being the restrictions of $W^A$ and 
$W^{A \cup \{i\}}$ to the cell $P_i$, respectively.  
Above, we used the bilinearity of $Cov_{Z=1, \mathcal{S}_{1,2}}$ to
express the left-hand side as a telescoping sum.  (Except for the
conditioning step, this decomposition is akin to a decomposition
developed in \cite{chat2}.)

Let us start by estimating the last term on the right-hand side of
\eqref{covariancedecompose}.  
 Letting 
$\xi_j:=\tilde{\Delta}_jf(W)\tilde{\tilde{\Delta}}_jf(W^A)$ and
$\xi_k:=\tilde{\Delta}_kf(W) \tilde{\tilde{\Delta}}_kf(W^B)$, with a slight abuse of notation as $\xi_j$ depends on $A$, while $\xi_k$ depends on $B$, we have 
\begin{eqnarray*}
&& \! \! \! \! \! \! \! \! \! \!  \! \! \! \! \!  Cov_{Z=1,
\mathcal{S}_{1,2}}(\tilde{\Delta}_jf(W)\tilde{\tilde{\Delta}}_jf(W^A),
\tilde{\Delta}_kf(W)
\tilde{\tilde{\Delta}}_kf(W^B))  \\
&&= \mathbb{E}((\xi_j - \mathbb{E} \xi_j) 
(\xi_k - \mathbb{E} \xi_k)  \mathbf{1}((A,B,j,k)\in \mathcal{S}_{1,2})| Z=1) \\
&&= \frac{\mathbb{E}((\xi_j - \mathbb{E} \xi_j) 
(\xi_k - \mathbb{E} \xi_k)  \mathbf{1}((A,B,j,k)\in \mathcal{S}_{1,2}, Z=1)) }{\mathbb{P}(Z=1)}  \\
&&= \frac{\mathbb{E}((\xi_j - \mathbb{E} \xi_j) 
(\xi_k - \mathbb{E} \xi_k)  \mathbf{1}((A,B,j,k)\in \mathcal{S}_{1,2})) }{\mathbb{P}(Z=1)} \\
&& \quad -  \frac{\mathbb{E}((\xi_j - \mathbb{E} \xi_j) 
(\xi_k - \mathbb{E} \xi_k)  \mathbf{1}((A,B,j,k)\in \mathcal{S}_{1,2}, Z = 0)) }{\mathbb{P}(Z=1)}  
\end{eqnarray*}
Note that the  second term on the right-most equality  is, in absolute value,  at most 
$$ C e^{-n^{1-\alpha}(1+\ln\,(1+n^\alpha))},$$ 
as from \eqref{Zequals0probest}, $\mathbb{P}(Z=1) \ge 1 -1/(2e)$ and $\mathbb{P}(Z=0) \le e^{-n^{1-\alpha}(1+\ln\,(1+n^\alpha))}$.  

So we focus on the other term and evaluate $\mathbb{E}((\xi_j - \mathbb{E} \xi_j) 
(\xi_k - \mathbb{E} \xi_k)  \mathbf{1}((A,B,j,k)\in \mathcal{S}_{1,2}))$. 
 We have 
\begin{eqnarray*} 
&&  \! \! \! \! \! \! \! \! \! \!  \! \! \! \! \!  \mathbb{E}((\xi_j - \mathbb{E} \xi_j) 
(\xi_k - \mathbb{E} \xi_k)  \mathbf{1}((A,B,j,k)\in \mathcal{S}_{1,2}))  \\
&& = \mathbb{E}((\xi_j - \mathbb{E} \xi_j) 
(\xi_k - \mathbb{E} \xi_k) \mid  \mathbf{1}((A,B,j,k)\in \mathcal{S}_{1,2}) \mathbb{P}((A,B,j,k)\in \mathcal{S}_{1,2}), 
\end{eqnarray*}
and, by conditional independence, the above right-hand side is equal to
$$
\mathbb{E}((\xi_j - \mathbb{E} \xi_j) 
  \mid  \mathbf{1}((A,B,j,k)\in \mathcal{S}_{1,2}))) \mathbb{E}((\xi_k - \mathbb{E} \xi_k) 
  \mid  \mathbf{1}((A,B,j,k)\in \mathcal{S}_{1,2})))  \mathbb{P}((A,B,j,k)\in \mathcal{S}_{1,2}).  
$$

Now,
\begin{eqnarray*}
&&  \! \! \! \! \! \! \! \! \! \!  \! \! \! \! \!  \mathbb{E}((\xi_j - \mathbb{E} \xi_j) 
  \mid  \mathbf{1}((A,B,j,k)\in \mathcal{S}_{1,2}))  \\
  &=	& \mathbb{E}((\xi_j
  \mid  \mathbf{1}((A,B,j,k)\in \mathcal{S}_{1,2}))  - \mathbb{E} \xi_j  \\
  &=& \mathbb{E}((\xi_j
  \mid  \mathbf{1}((A,B,j,k)\in \mathcal{S}_{1,2})) \\
  && - \  \mathbb{E}((\xi_j
  \mid  \mathbf{1}((A,B,j,k)\in \mathcal{S}_{1,2})) \mathbb{P}( (A,B,j,k)\in \mathcal{S}_{1,2}) \\
  && - \ \mathbb{E}((\xi_j
  \mid  \mathbf{1}((A,B,j,k)\in \mathcal{S}_{1,1})) \mathbb{P}(  (A,B,j,k)\in \mathcal{S}_{1,1}).  
\end{eqnarray*}
Using elementary manipulations, the last equality can be rewritten as 
$$ (\mathbb{E}((\xi_j
  \mid  \mathbf{1}((A,B,j,k)\in \mathcal{S}_{1,2})) - \mathbb{E}((\xi_j
  \mid  \mathbf{1}((A,B,j,k)\in \mathcal{S}_{1,1})) ) \mathbb{P}(   (A,B,j,k)\in \mathcal{S}_{1,1}).$$
 Following exactly the same steps,    write $\mathbb{E}((\xi_k - \mathbb{E} \xi_k) 
  \mid  \mathbf{1}((A,B,j,k)\in \mathcal{S}_{1,2}))$ as  
  $$ (\mathbb{E}((\xi_k
  \mid  \mathbf{1}((A,B,j,k)\in \mathcal{S}_{1,2})) - \mathbb{E}((\xi_k
  \mid  \mathbf{1}((A,B,j,k)\in \mathcal{S}_{1,1})) ) \mathbb{P}(   (A,B,j,k)\in \mathcal{S}_{1,1}).$$

Combining these observations,  and using again  \eqref{Zequals0probest}, $\mathbb{P}(Z=1) \geq 1 - 1/(2e)$,  $n \geq 1$,   we obtain
   \begin{eqnarray*} 
&&   \! \! \! \! \! \! \! \! \! \!  \! \! \! \! \!  Cov_{Z=1,
\mathcal{S}_{1,2}}(\tilde{\Delta}_jf(W)\tilde{\tilde{\Delta}}_jf(W^A),
\tilde{\Delta}_kf(W)
\tilde{\tilde{\Delta}}_kf(W^B))  \\
&& \leq \frac{C \mathbb{P}((A,B,j,k)\in \mathcal{S}_{1,2}) (\mathbb{P}((A,B,j,k)\in \mathcal{S}_{1,1}))^2 }{\mathbb{P}(Z=1)} + C \mathbb{P}(Z = 0) \\
&& \leq C \mathbb{P}((A,B,j,k)\in \mathcal{S}_{1,1}) + C \mathbb{P}(Z = 0) \\
&& \leq C (\mathbb{P}((A,B,j,k)\in \mathcal{S}_{1,1}, Z=1) + \mathbb{P}((A,B,j,k)\in \mathcal{S}_{1,1}, Z=0)) +  C \mathbb{P}(Z = 0) \\
&& \leq C \mathbb{P}((A,B,j,k)\in \mathcal{S}_{1,1} \mid Z=1) \mathbb{P}(Z=1)   + C \mathbb{P}(Z = 0).
\end{eqnarray*}

We therefore arrive at:  
\begin{eqnarray}\label{estfirstinthesumlat}
&&\!\!\!\!\!\!\!\!\!\!\!\!\!\!\!\!\!\!\!\sum_{\mathcal{S}_1  } \frac{\left|Cov_{Z=1,
\mathcal{S}_{1,2}}(\tilde{\Delta}_jf(W)\tilde{\tilde{\Delta}}_jf(W^A),\tilde{\Delta}_kf(W)
\tilde{\tilde{\Delta}}_kf(W^B))\right|}{\binom{2n}{|A|}
(2n-|A|)\binom{2n}{|B|} (2n-|B|)} \nonumber 
\nonumber \\ 
&& \leq 
\sum_{\mathcal{S}_1 } \frac{C \mathbb{P}((A,B,j,k)\in \mathcal{S}_{1,1}|Z=1) \mathbb{P}(Z=1)  
+ C \mathbb{P}(Z=0)}{\binom{2n}{|A|}
(2n-|A|)\binom{2n}{|B|} (2n-|B|)}
\nonumber \\ 
&& \leq 
\sum_{\mathcal{S}_1 } \frac{C \mathbb{E}(\mathbf{1}((A,B,j,k)\in \mathcal{S}_{1,1})|Z=1)  
+ C \mathbb{P}(Z=0)}{\binom{2n}{|A|}
(2n-|A|)\binom{2n}{|B|} (2n-|B|)}
\nonumber \\ 
&&  \leq
C n^{1+\alpha} +   C n^2 e^{-n^{1-\alpha}(1+\log\,(1+n^\alpha))},
\end{eqnarray}
where for the last step \eqref{P22estimate} is used, as well as the estimates 
in \eqref{MasterEq} and \eqref{Zequals0probest}.

We continue by obtaining upper bounds for the first four summands in \eqref{covariancedecompose}. 
We just focus on the estimation of the first of these four terms since the other three can be 
estimated in a similar way.  
Indeed, it will be clear from the discussion below that the third of these four terms 
can be estimated in exactly the same way as done for the first of the four.  
Also, with steps similar to the ones  performed in estimating this first term, 
one can easily see that the estimation of the second and fourth of these terms reduces to the estimation of 
$$\mathbb{E}_{Z=1, \mathcal{S}_{1,2}} 
|\Delta_j f(W^A) - \tilde{\tilde{\Delta}}_j f(W^A)|.$$ 
(Again, and throughout,  $\mathbb{E}_{Z=1}$ is short for conditional expectation given $\{Z=1\}$, 
while $\mathbb{E}_{Z=1, \mathcal{S}_{1,2}}(\cdot) = \mathbb{E}_{Z=1}(\cdot\mathbf{1}_{(A,B,j,k) 
\in \mathcal{S}_{1,2}})$.)  
%Moreover,  recalling that $j \notin A$,   the following (unconditional) equality in distribution holds true:  
%$$|\Delta_j f(W^A) - \tilde{\tilde{\Delta}}_j f(W^A)| =_d |\Delta_j f(W) - \tilde{\Delta}_j f(W)|.$$
 Next,  
\begin{eqnarray*}
&&\!\!\!\!\!\!\!\!\!\!\!\!\!\!\!\!\!\!\! \mathbb{E}_{Z=1, \mathcal{S}_{1,2}} 
|\Delta_j f(W^A) - \tilde{\tilde{\Delta}}_j f(W^A)| \\ &=& \mathbb{E} \left(
|\Delta_j f(W^A) - \tilde{\tilde{\Delta}}_j f(W^A)| \mathbf{1}((A,B,j,k) \in \mathcal{S}_{1,2}) \big| Z=1 \right) \\ 
 &=& \frac{\mathbb{E} \left(
|\Delta_j f(W^A) - \tilde{\tilde{\Delta}}_j f(W^A)| \mathbf{1}((A,B,j,k)
 \in \mathcal{S}_{1,2})\mathbf{1}(Z=1) \right)}{\mathbb{P}(Z=1)} \\
&\leq& \frac{\mathbb{E} \left(
|\Delta_j f(W^A) - \tilde{\tilde{\Delta}}_j f(W^A)| \mathbf{1}((A,B,j,k) \in \mathcal{S}_{1,2}) \right)}{\mathbb{P}(Z=1)}. 
\end{eqnarray*}
Now, writing $\mathcal{S}_{1,2}^A$ in place of $\mathcal{S}_{1,2}$ when using 
the sequence $W^A$ instead of $W$, 
the last inequality, just above, leads to:   
\begin{eqnarray}\label{eq:covdecompotherterms}
\nonumber &&\!\!\!\!\!\!\!\!\!\!\!\!\!\!\!\!\!\!\! \mathbb{E}_{Z=1, \mathcal{S}_{1,2}} 
|\Delta_j f(W^A) - \tilde{\tilde{\Delta}}_j f(W^A)|  \\ \nonumber  &\leq& \frac{\mathbb{E} \left(
|\Delta_j f(W^A) - \tilde{\tilde{\Delta}}_j f(W^A)| \right)}{\mathbb{P}(Z=1)} \\
\nonumber  &=&  \frac{\mathbb{E} \left(
|\Delta_j f(W^A) - \tilde{\tilde{\Delta}}_j f(W^A)|  \mathbf{1}((A,B,j,k) 
\in \mathcal{S}_{1,2}^A) \right)}{\mathbb{P}(Z=1)}  \\
&& +  \frac{\mathbb{E} \left(
|\Delta_j f(W^A) - \tilde{\tilde{\Delta}}_j f(W^A)|  \mathbf{1}((A,B,j,k) 
\notin \mathcal{S}_{1,2}^A) \right)}{\mathbb{P}(Z=1)}. 
\end{eqnarray}
Then, since 
\begin{eqnarray*}
&\!\!\!\!\!\!\!|\Delta_j f(W^A) - \tilde{\tilde{\Delta}}_j f(W^A)|  \mathbf{1}((A,B,j,k) \in \mathcal{S}_{1,2}^A) \\
&\qquad \qquad \qquad \qquad =_d |\Delta_j f(W) - \tilde{\Delta}_j f(W)| \mathbf{1}((A,B,j,k) \in \mathcal{S}_{1,2}), 
\end{eqnarray*}
the first term on the right-hand side of 
\eqref{eq:covdecompotherterms} is equal to 
\begin{equation}\label{eq:covdecompothertermsadd1}
\frac{\mathbb{E}\left(
|\Delta_j f(W) - \tilde{\Delta}_j f(W)|\mathbf{1}((A,B,j,k) \in \mathcal{S}_{1,2})\right)}{\mathbb{P}(Z=1)},
\end{equation}
which we will estimate further, below, when working out the estimation of the first term on the right-hand side 
of \eqref{eq:covdecompotherterms}.  
Also, for the second term in \eqref{eq:covdecompotherterms}, 
noting that $|\Delta_j f(W) - \tilde{\tilde{\Delta}}_j f(W)| \leq 2$, and that by the iid assumption 
$W$ and $W^A$ are 
identically distributed, we have 
\begin{eqnarray}
\nonumber &&\!\!\!\!\!\!\!\!\!\!\!\!\!\!\!\!\!\!\! \frac{\mathbb{E} \left(
|\Delta_j f(W^A) - \tilde{\tilde{\Delta}}_j f(W^A)|  \mathbf{1}((A,B,j,k) 
\notin \mathcal{S}_{1,2}^A) \right)}{\mathbb{P}(Z=1)}  \\ \nonumber  && = \frac{\mathbb{E} \left(
|\Delta_j f(W) - \tilde{\tilde{\Delta}}_j f(W)|  \mathbf{1}((A,B,j,k) 
\notin \mathcal{S}_{1,2}) \right)}{\mathbb{P}(Z=1)} \\
\nonumber  && \leq C \mathbb{E}(\mathbf{1}_{(A,B,j,k) \in \mathcal{S}_{1,1}} \mid Z=1),
\end{eqnarray}
and then  
\begin{eqnarray}
\nonumber &&\!\!\!\!\!\!\!\!\!\!\!\!\!\!\!\!\!\!\! \sum_{\mathcal{S}_1} \frac{\mathbb{E} \left(
|\Delta_j f(W^A) - \tilde{\tilde{\Delta}}_j f(W^A)|  \mathbf{1}((A,B,j,k) 
\notin \mathcal{S}_{1,2}^A) \right)}{\mathbb{P}(Z=1) \binom{2n}{|A|}
(2n-|A|)\binom{2n}{|B|} (2n-|B|)}   \\
\nonumber  && \leq C \sum_{\mathcal{S}_1} \frac{\mathbb{E}(\mathbf{1}_{(A,B,j,k) \in \mathcal{S}_{1,1}} \mid Z=1)}{  \binom{2n}{|A|}
(2n-|A|)\binom{2n}{|B|} (2n-|B|)}.
\end{eqnarray}
But, this last term on the right-hand side was already shown to be bounded by 
$ Cn^{1+\alpha}  + C n^{3 - 3 \alpha /2} (\ln n^\alpha)^{1/2}$, while reaching out  \eqref{cov21estimate}.
Therefore, focusing on the estimation of  ${\mathbb{E}_{\mathcal{S}_{1,2}}
|\Delta_j f(W) - \tilde{\Delta}_j f(W)|}/{\mathbb{P}(Z=1)}$ or, indeed, merely on the estimation of $\mathbb{E}_{\mathcal{S}_{1,2}}
|\Delta_j f(W) - \tilde{\Delta}_j f(W)|$, will suffice for our purposes for the  second and the 
fourth of the terms in  \eqref{covariancedecompose}. 
 This will be done while discussing the estimation of the first term below as noted earlier. 
 
%Noting that the estimation of the fourth term on the right-hand side of   is similar to the estimation of the second one, 
So,	 we can now focus on estimating the first term in \eqref{covariancedecompose} (and as already indicated, 
similar arguments will 
provide a similar estimate for the other three terms) which is given by:  $$Cov_{Z=1,
\mathcal{S}_{1,2}}((\Delta_jf(W)-\tilde{\Delta}_jf(W))
\Delta_jf(W^A),\Delta_kf(W)\Delta_kf(W^B)).$$ 
To do so, let $$U :=
(\Delta_jf(W)-\tilde{\Delta}_jf(W))\Delta_jf(W^A),$$ and
$$V:=\Delta_kf(W)\Delta_kf(W^B),$$
so that we wish to estimate $Cov_{Z=1, \mathcal{S}_{1,2}}(U,V)$.
But,
\begin{eqnarray*}
&&\!\!\!\!\!\!\left| Cov_{Z=1, \mathcal{S}_{1,2}}(U,V)\right| \\
&&   =\left|\mathbb{E}((U-\mathbb{E}U)(V-\mathbb{E}V)
\mathbf{1}_{(A,B,j,k) \in \mathcal{S}_{1,2}}\!\left|\right.\!Z=1)\right| \\
%&&\qquad  = \left|\mathbb{E}(UV -U \mathbb{E}V - V \mathbb{E}U
%+ \mathbb{E}U \mathbb{E}V\!\left|\right.\!Z=1)\right| \\
&& \le \mathbb{E}(|UV| \mathbf{1}_{(A,B,j,k) \in
\mathcal{S}_{1,2}}\!\left|\right.\!Z=1)+ \mathbb{E}|V|
\mathbb{E}(|U|\mathbf{1}_{(A,B,j,k) \in \mathcal{S}_{1,2}} \!\left|\right.\!Z=1) \\
&&  \qquad  + \  \mathbb{E}|U| \mathbb{E}(|V| \mathbf{1}_{(A,B,j,k) \in
\mathcal{S}_{1,2}}\!\left|\right.\!Z=1)
+ \mathbb{E}|U|\mathbb{E}|V|\mathbb{E}
(\mathbf{1}_{(A,B,j,k) \in \mathcal{S}_{1,2}} \!\left|\right.\!Z=1) \\
&& := T_1+T_2+T_3+T_4,
\end{eqnarray*}
and note here that $T_i, i=1,2,3,4$ are functions of $(A,B,j,k)$. Let us
begin by estimating
$$T_1 = \mathbb{E}_{Z=1}|(\!(\Delta_jf(W)-\tilde{\Delta}_jf(W)\!)
\Delta_jf(W^A)\!)(\Delta_kf(W)\Delta_kf(W^B)\!)\mathbf{1}_{(A,B,j,k)
\in \mathcal{S}_{1,2}}|.$$ Since
$|\Delta_jf(W^A)(\Delta_kf(W)\Delta_kf(W^B))| \leq 1$,
\begin{equation}\label{est:T1}
T_1 \le \mathbb{E}_{Z=1}\left(|\Delta_jf(W)-\tilde{\Delta}_jf(W)|
\mathbf{1}_{(A,B,j,k) \in \mathcal{S}_{1,2}}\right).
\end{equation}
A similar estimate also reveals that
\begin{equation}\label{est:T2}
T_2 \le
\mathbb{E}_{Z=1}\left(|\Delta_jf(W)-\tilde{\Delta}_jf(W)|\mathbf{1}_{(A,B,j,k)
\in \mathcal{S}_{1,2}}\right).
\end{equation}
Next, for $T_3$ and $T_4$, and since $|V| \le 1$,
\begin{eqnarray}\label{est:T3T4}
\nonumber T_3 + T_4 \leq 2 \mathbb{E}|U| 
&\leq& 2 \mathbb{E}|\Delta_jf(W)-\tilde{\Delta}_jf(W)| \\
\nonumber &=& 2
\mathbb{E}_{Z=1}\left(|\Delta_jf(W)-\tilde{\Delta}_jf(W)|
\mathbf{1}_{(A,B,j,k) \in \mathcal{S}_{1,2}}\right)\mathbb{P}(Z=1) \\
\nonumber && \!\!\!\!+ 2
\mathbb{E}_{Z=1}\left(|\Delta_jf(W)-\tilde{\Delta}_jf(W)|
\mathbf{1}_{(A,B,j,k) \in \mathcal{S}_{1,1}}\right)\mathbb{P}(Z=1)\\
\nonumber && \!\!\!\!+ 2
\mathbb{E}_{Z=0}\left(|\Delta_jf(W)-\tilde{\Delta}_jf(W)|
\mathbf{1}_{(A,B,j,k) \in \mathcal{S}_{1,2}}\right)\mathbb{P}(Z=0)\\
\nonumber && \!\!\!\!+ 2
\mathbb{E}_{Z=0}\left(|\Delta_jf(W)-\tilde{\Delta}_jf(W)|
\mathbf{1}_{(A,B,j,k) \in \mathcal{S}_{1,1}}\right)\mathbb{P}(Z=0) \\
\nonumber &\leq&  2
\mathbb{E}_{Z=1}\left(|\Delta_jf(W)-\tilde{\Delta}_jf(W)|
\mathbf{1}_{(A,B,j,k) \in \mathcal{S}_{1,2}}\right)\\
\nonumber && \!\!\!\!  + 2
\mathbb{E}_{Z=1}\left(|\Delta_jf(W)-\tilde{\Delta}_jf(W)|
\mathbf{1}_{(A,B,j,k) \in \mathcal{S}_{1,1}}\right) \\
&& \!\!\!\! + Ce^{-n^{1-\alpha}(1+\ln\,(1+n^\alpha))},
\end{eqnarray}
where $\mathbb{E}_{Z=0}$ (resp.~$\mathbb{E}_{Z=1}$) is short for the conditional expectation given $\{Z=0\}$ (resp.~given $\{Z=1\}$), and 
where we used the trivial bound on $\mathbb{P}(Z=1)$, and also
\eqref{Zequals0probest}, for the last inequality.

Now, denote by $h(A,B,j,k)$ the sum of the first four terms on the
right-hand side of \eqref{covariancedecompose}.  Then, performing
estimations as in getting \eqref{est:T1}, \eqref{est:T2} and
\eqref{est:T3T4}, for the first  and third term of this sum, 
and keeping in mind the discussion following  \eqref{eq:covdecompotherterms},  
so that similar estimates also hold true for the second and fourth term of the sum,   
we obtain
 \begin{eqnarray*}
&&\! \sum_{\mathcal{S}_1} \left|\frac{h(A,B,j,k)}{\binom{2n}{|A|}
(2n-|A|)\binom{2n}{|B|} (2n-|B|)}\right| \nonumber \\
&& \quad \leq C \sum_{\mathcal{S}_1}
\frac{\mathbb{E}_{Z=1}\left(|\Delta_jf(W)-\tilde{\Delta}_jf(W)|
\mathbf{1}_{(A,B,j,k) \in \mathcal{S}_{1,2}}\right)}{\binom{2n}{|A|}
(2n-|A|)\binom{2n}{|B|} (2n-|B|)}\\
&& \quad  \quad + C\sum_{\mathcal{S}_1}
\frac{\mathbb{E}_{Z=1}\left(|\Delta_jf(W)-\tilde{\Delta}_jf(W)|
\mathbf{1}_{(A,B,j,k) \in \mathcal{S}_{1,1}}\right)}{\binom{2n}{|A|}
(2n-|A|)\binom{2n}{|B|} (2n-|B|)} \\
&& \quad \quad + C\sum_{\mathcal{S}_1}  \frac{\mathbb{E}_{Z=1}\left(|\Delta_k
f(W)-\tilde{\Delta}_k f(W)| \mathbf{1}_{(A,B,j,k) \in
\mathcal{S}_{1,2}}\right)}{\binom{2n}{|A|}
(2n-|A|)\binom{2n}{|B|} (2n-|B|)}\\
&& \quad \quad  + C\sum_{\mathcal{S}_1}  \frac{\mathbb{E}_{Z=1}\left(|\Delta_k
f(W)-\tilde{\Delta}_k f(W)| \mathbf{1}_{(A,B,j,k) \in
\mathcal{S}_{1,1}}\right)}{\binom{2n}{|A|}
(2n-|A|)\binom{2n}{|B|} (2n-|B|)} \\
&& \quad \quad+ C\sum_{\mathcal{S}_1} \frac{e^{-n^{1-\alpha}
(1+\ln\,(1+n^\alpha))}}{\binom{2n}{|A|} (2n-|A|)\binom{2n}{|B|} (2n-|B|)}.
\end{eqnarray*}
Noting that the sums involving $k$'s are identical to the sums  involving $j$'s,  we rewrite this last upper bound as  
\begin{eqnarray*}
&&\! \sum_{\mathcal{S}_1} \left|\frac{h(A,B,j,k)}{\binom{2n}{|A|}
(2n-|A|)\binom{2n}{|B|} (2n-|B|)}\right| \nonumber \\
&& \quad \leq C \sum_{\mathcal{S}_1}
\frac{\mathbb{E}_{Z=1}\left(|\Delta_jf(W)-\tilde{\Delta}_jf(W)|
\mathbf{1}_{(A,B,j,k) \in \mathcal{S}_{1,2}}\right)}{\binom{2n}{|A|}
(2n-|A|)\binom{2n}{|B|} (2n-|B|)}\\
&& \quad \quad  + C\sum_{\mathcal{S}_1}
\frac{\mathbb{E}_{Z=1}\left(|\Delta_jf(W)-\tilde{\Delta}_jf(W)|
\mathbf{1}_{(A,B,j,k) \in \mathcal{S}_{1,1}}\right)}{\binom{2n}{|A|}
(2n-|A|)\binom{2n}{|B|} (2n-|B|)} \\
&& \quad \quad+ C\sum_{\mathcal{S}_1} \frac{e^{-n^{1-\alpha}(1+\ln\,(1+n^\alpha))}}
{\binom{2n}{|A|} (2n-|A|)\binom{2n}{|B|} (2n-|B|)}.
\end{eqnarray*}
As with previous computations, using \eqref{MasterEq} 
and \eqref{Zequals0probest}, the
third sum on the above right-hand side is itself upper-bounded by 
\begin{equation}\label{S11first}
Cn^2e^{-n^{1-\alpha}(1+\ln\,(1+n^\alpha))},
\end{equation}
while, using \eqref{sum11} and \eqref{P22estimate}, the middle sum is 
upper-bounded by
\begin{equation}\label{S11second}
Cn^{1+\alpha}.  
%+ n^{3-3\alpha /2} (\ln
%n^{\alpha})^{1/2}).
\end{equation}

Therefore, we are just left with estimating
\begin{equation*}\label{cov22estimate2}
\sum_{\mathcal{S}_1}  \frac{\mathbb{E}_{Z=1}\left(|\Delta_jf(W)-\tilde{\Delta}_jf(W)|
\mathbf{1}_{(A,B,j,k) \in \mathcal{S}_{1,2}}\right)}{\binom{2n}{|A|}
(2n-|A|)\binom{2n}{|B|} (2n-|B|)}.
\end{equation*}

Noting that 
\begin{eqnarray}\label{est:cov22ctd}
\nonumber \sum_{\mathcal{S}_1}  \frac{\mathbb{E}_{Z=1}\left(|\Delta_jf(W)-\tilde{\Delta}_jf(W)|
\mathbf{1}_{(A,B,j,k) \in \mathcal{S}_{1,2}}\right)}{\binom{2n}{|A|}
(2n-|A|)\binom{2n}{|B|} (2n-|B|)} \\
\le \quad \sum_{\mathcal{S}_1}  \frac{\mathbb{E}_{Z=1}|\Delta_jf(W)-\tilde{\Delta}_jf(W)|}{\binom{2n}{|A|}
(2n-|A|)\binom{2n}{|B|} (2n-|B|)},
\end{eqnarray}
we can just focus on estimating $\mathbb{E}_{Z=1}|\Delta_jf(W)-\tilde{\Delta}_jf(W)|$. 
To do so, the following 
simple proposition will be   useful.  
\begin{proposition}\label{propn:monotone}
For any $j \in [2n]$, $$\Delta_j f(W) \leq \widetilde{\Delta}_j f(W).$$
\end{proposition}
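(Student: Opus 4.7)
The plan is to use the additive decomposition of $f(W)$ along the cells of the optimal alignment $\mathbf{r}$, namely $f(W)=\sum_{i=1}^{d}f(P_i)$, together with the observation that when $W_j$ is replaced by its independent copy $W_j'$ at most one cell is affected, namely the one containing position $j$. Let $c=c(j)\in\{1,\dots,d\}$ denote the unique cell-index with $P_c=P_j$. Every other cell $P_i$, $i\ne c$, is specified by positions distinct from $j$, so its $X$-part $P_i^1$ and its $Y$-part $P_i^2$ are unchanged when $W$ is replaced by $W^j$.

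Thus the cell-partition of $\mathbf{r}$, when applied to $W^j$, yields a valid concatenation of common subsequences of $X^j$ and $Y^j$ of total length
\begin{equation*}
\sum_{i\ne c}f(P_i)+f(P_j')\;=\;f(W)-f(P_j)+f(P_j').
\end{equation*}
Since $f(W^j)$ is the maximum length of any common subsequence of $(X^j,Y^j)$, this gives the suboptimality bound
\begin{equation*}
f(W^j)\;\ge\;f(W)-f(P_j)+f(P_j'),
\end{equation*}
which, after rearrangement, is precisely the asserted comparison between $f(W)-f(W^j)$ and $f(P_j)-f(P_j')$, i.e.\ between $\Delta_j f(W)$ and $\widetilde{\Delta}_j f(W)$.

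The only technical point to verify is that the cell partition of $\mathbf{r}$ genuinely produces a legal concatenation when applied to $W^j$: but the cell boundaries depend only on the integer vector $(r_0,\dots,r_d)$ and on $v=n^{\alpha}$, neither of which is affected by the single-coordinate substitution $W_j\mapsto W_j'$. Consequently, the cells remain pairwise disjoint in $X^j$ and in $Y^j$ and appear in the same increasing order, so their individual longest common subsequences concatenate to a bona fide common subsequence of $(X^j,Y^j)$. I therefore foresee no serious obstacle: the whole content is a one-line suboptimality argument that pairs the additivity $f(W)=\sum_i f(P_i)$ with the localisation of the perturbation to the single cell $P_c$.
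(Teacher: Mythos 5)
Your suboptimality bound is correct mathematics, but it proves the \emph{reverse} of the stated inequality, and the final sentence (``which, after rearrangement, is precisely the asserted comparison'') hides this sign reversal. From $f(W^j)\ge f(W)-f(P_j)+f(P_j')$ one gets $f(W)-f(W^j)\le f(P_j)-f(P_j')$, i.e.\ $\Delta_j f(W)\le \widetilde{\Delta}_j f(W)$, whereas Proposition~\ref{propn:monotone} asserts $\Delta_j f(W)\ge \widetilde{\Delta}_j f(W)$. The mismatch is structural rather than cosmetic: the cell decomposition $\mathbf{r}$ is optimal for $W$ but merely feasible for $W^j$, so any concatenation/feasibility argument of the kind you use can only bound $f(W^j)$ from below, hence $\Delta_j f(W)$ from above. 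The stated $\ge$ direction would require an upper bound on $f(W^j)$ in terms of the cell perturbation, which is precisely the information a suboptimality argument cannot supply; this is the genuine gap.

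The paper's own proof is of a different nature: both differences take values in $\{-1,0,1\}$, and it rules out $\Delta_j f(W)<\widetilde{\Delta}_j f(W)$ case by case, arguing for instance that if changing $W_j$ leaves the LCS of the cell containing $j$ unchanged then, since no coordinate outside that cell was touched, the global LCS cannot increase --- a local exchange argument, not a feasibility bound. You should also be aware of a tension worth raising rather than papering over: since your inequality is valid for every realization, the proposition as stated would force $\Delta_j f(W)=\widetilde{\Delta}_j f(W)$ identically, which would make the subsequent estimation of $\mathbb{E}_{Z=1}|\Delta_jf(W)-\widetilde{\Delta}_jf(W)|$ vacuous; moreover, small examples show the $\ge$ direction can fail for an admissible choice of $\mathbf{r}$ (take $n=2$, $v=1$, $X=(1,1)$, $Y=(0,1)$, the optimal decomposition $r=(0,2,2)$ with cells $(X_1;Y_1Y_2)$ and $(X_2;\,\cdot\,)$, and $X_1'=0$: then $\widetilde{\Delta}_1 f(W)=0$ while $f(W^1)=f(W)+1$, so $\Delta_1 f(W)=-1$). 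The inequality you actually prove is the one that always holds, and it would serve the later argument equally well, since the proposition is only used to drop the absolute value before showing both conditional expectations are exponentially small. Nevertheless, as a verification of the proposition as literally stated your argument does not succeed, and at a minimum you must state explicitly which direction your rearrangement yields.
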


\begin{proof}
Assume not, and that $\Delta_j f(W) > \widetilde{\Delta}_j f(W).$
Then either $\Delta_j f(W) =1$  and $\widetilde{\Delta}_j f(W)=0$,
or $\Delta_j f(W)=0$ and $\widetilde{\Delta}_j f(W)=-1$. Consider
the former. Then, changing the $j$th coordinate does not affect the
length of the longest common subsequence of the cell containing $j$.
Since the coordinates outside that particular cell have not been
changed, the overall length of the longest common subsequence 
cannot decrease, that is, $\Delta_j$ cannot be $1$. 
The other case is similar.  
\end{proof}

Returning to the estimation of 
$\mathbb{E}_{Z=1}|\Delta_jf(W)-\tilde{\Delta}_jf(W)|$, 
using the domination property obtained  in  
Proposition~\ref{propn:monotone}, we have 
$$\mathbb{E}_{Z=1}|\Delta_jf(W)-\tilde{\Delta}_jf(W)|
 = \mathbb{E}_{Z=1}(\tilde{\Delta}_jf(W)) -  \mathbb{E}_{Z=1}(\Delta_jf(W)).$$

We now claim that both terms on the right-hand side of the last expression 
are exponentially small in $n$.  Let us first deal with 
$\mathbb{E}_{Z=1}(\Delta_jf(W))$, the other term, which is similar, is dealt with afterwards. 

We have 
\begin{eqnarray*}
\mathbb{E}_{Z=1}(\Delta_jf(W)) &=& \mathbb{E}_{Z=1}(\Delta_jf(W) \mathbf{1}(Z^j =1)) 
+ \mathbb{E}_{Z=1}(\Delta_jf(W) \mathbf{1}(Z^j =0)) \\
&=& \frac{\mathbb{E}(\Delta_jf(W) \mathbf{1}(Z =1) 
\mathbf{1}(Z^j =1))}{\mathbb{P}(Z=1)} \\
&& + \quad \frac{\mathbb{E}(\Delta_jf(W) \mathbf{1}(Z =1) \mathbf{1}(Z^j =0))}
{\mathbb{P}(Z=1)},
\end{eqnarray*}
where $Z^j$ is the indicator random variable 
defined in the same way as $Z$, except that the $j^{th}$ coordinate of $W$ 
is replaced by the independent copy $W_j'$.  
Note that, for 
any $j \in [2n]$, $Z$ and $Z^j$ are identically distributed 
but that they are certainly not independent. 

Looking, first, at the second term in the last expression, we have, with 
the help of \eqref{Zequals0probest}, and since $Z$ and $Z^j$ are identically distributed, 

\begin{eqnarray*}
\frac{\left|\mathbb{E}(\Delta_jf(W) \mathbf{1}(Z =1) 
\mathbf{1}(Z^j =0))\right|}{\mathbb{P}(Z=1)} 
\leq \frac{\mathbb{P}(Z^j = 0)}{\mathbb{P}(Z = 1)} \le C e^{-n^{1-\alpha
	}(1+\ln\,(1+n^\alpha))}. 
%&\leq& \frac{e^{-n^{1-\alpha
%}(1+\ln\,(1+n^\alpha))}}{1 - e^{-n^{1-\alpha
%}(1+\ln\,(1+n^\alpha))}} \\ &\leq& C e^{-n^{1-\alpha
%}(1+\ln\,(1+n^\alpha))}. 
\end{eqnarray*}
Also, writing 
\begin{eqnarray*}
\mathbb{E}(\Delta_jf(W) \mathbf{1}(Z =1) \mathbf{1}(Z^j =1)) 
&=& \mathbb{E}((f(W)-f(W^j)) \mathbf{1}(Z =1) \mathbf{1}(Z^j =1)) \\
&=& \mathbb{E}(f(W) \mathbf{1}(Z =1) \mathbf{1}(Z^j =1))  \\
&& - \  \mathbb{E}(f(W^j) \mathbf{1}(Z =1) \mathbf{1}(Z^j =1))\\
&=& 0, 
\end{eqnarray*}
\noindent
since, again, $Z$ and   $Z^j$ are identically distributed.  
%we conclude that  
%$$\mathbb{E}[\Delta_jf(W) \mathbf{1}(Z =1) \mathbf{1}(Z^j =1)] = 0.$$ 
These observations yield
$$\left|\mathbb{E}_{Z=1}(\Delta_jf(W))\right| 
\le C e^{-n^{1-\alpha}(1+\ln\,(1+n^\alpha))}.$$ 
%(The reason we can do this is that the configurations belong to $\mathcal{S}_{1,2}$ and in that case we deal with just a scaled version of the lcs problem.
Similarly, noting that the expectation is conditional on $Z=1$, replacing $n$ by $n^{\alpha}$, we have  
$$  |\mathbb{E}_{Z=1}(\tilde{\Delta}_j f(W) \mathbf{1}((A,B,j,k) \in \mathcal{S}_{1,2})) | \leq  C e^{- n^{(1 - \alpha ) \alpha} (1 + \log (1+n^{\alpha^2}))}.$$ 
(The reason for this last inequality is the fact that the configurations belong to $\mathcal{S}_{1,2}$ and, in that case, we just deal with a scaled version of the LCS problem.)

Now, note that 
\begin{eqnarray*}
\left|\mathbb{E}_{Z=1}(\tilde{\Delta}_jf(W))\right|  &\leq&  |\mathbb{E}_{Z=1}(\tilde{\Delta}_j f(W)  \mathbf{1}((A,B,j,k) \in \mathcal{S}_{1,1}) )|, \\
&& + \  |\mathbb{E}_{Z=1}(\tilde{\Delta}_j f(W) \mathbf{1}((A,B,j,k) \in \mathcal{S}_{1,2})) |,
\end{eqnarray*}
and, via Proposition ~\ref{prop:combcomp},  
$$\sum_{\mathcal{S}_1} \frac{ |\mathbb{E}_{Z=1}(\tilde{\Delta}_j f(W)  \mathbf{1}((A,B,j,k) \in \mathcal{S}_{1,1})) |}{\binom{2n}{|A|}
(2n-|A|)\binom{2n}{|B|} (2n-|B|)} \leq C n^{1 + \alpha},$$ 
and 
 $$\sum_{\mathcal{S}_1} \frac{ |\mathbb{E}_{Z=1}(\tilde{\Delta}_j f(W) \mathbf{1}((A,B,j,k) \in \mathcal{S}_{1,2})) |}{\binom{2n}{|A|}
(2n-|A|)\binom{2n}{|B|} (2n-|B|)} \leq C n^2  e^{- n^{(1 - \alpha) \alpha} (1 + \log (1+n^{\alpha^2}))},$$ which, when combined, yields 
 $$\sum_{\mathcal{S}_1} \frac{ |\mathbb{E}_{Z=1}(\tilde{\Delta}_j f(W))|}{\binom{2n}{|A|}
(2n-|A|)\binom{2n}{|B|} (2n-|B|)} \leq C n^{1 + \alpha}.$$ 
Thus,  from \eqref{est:cov22ctd} and the above estimates, 
\begin{equation}\label{est:last}
\sum_{\mathcal{S}_1}  \frac{\mathbb{E}_{Z=1}\left(|\Delta_jf(W)-\tilde{\Delta}_jf(W)|
\mathbf{1}_{(A,B,j,k) \in \mathcal{S}_{1,2}}\right)}{\binom{2n}{|A|}
(2n-|A|)\binom{2n}{|B|} (2n-|B|)} \leq C n^{1 + \alpha}.\end{equation}

\noindent 
 Combining \eqref{est:sum20},  \eqref{cov21estimate},  \eqref{estfirstinthesumlat},   
 \eqref{S11first},  \eqref{S11second} and \eqref{est:last} 
%\eqref{S11second}, \eqref{est:last}, and \eqref{covariancedecomplasttermfinal} 
finally gives
\begin{eqnarray}\label{Lastvarbound}
\Var T &\le& C\left(n^2 e^{-n^{1-\alpha }(1+\ln\,(1+n^\alpha))} +
n^{1+\alpha}+n^{3 - 3 \alpha /2} (\ln n^\alpha)^{1/2}\right).
\end{eqnarray}
Therefore, Theorem~\ref{chat1} and \eqref{est:2ndterm}, 
ensure that:
\begin{eqnarray*}\label{chatbound3}
d_W\left(\frac{LC_n-\bbe LC_n}{\sqrt{\Var LC_n}},\mathcal{G}\right)
%\le C\left(\frac{1}{n^{1/4}}\right)^{1/2}  + C\frac{1}{n^{1/2}} 
\le
C\frac{1}{n^{\frac{1-\alpha}{2}}},
\end{eqnarray*}
holds for every $n\geq 1$, with $C>0$ a constant independent of $n$, and for $\alpha > 4/5$ as then 
$1+\alpha > 3 -3\alpha/2$. 
\hfill $\square$

\begin{remark}\label{rmk:unifcase}

(i) The constant $C$ in Theorem~\ref{thm: CLT} is independent of
$n$ but depends on $m$, on $\alpha$, on $s_1$ and $s_2$ of Theorem~\ref{thm:
HMdiag}, which in turn depend on the distribution of $X_1$, as well as on 
the quantities involved in the constant $K$ 
and $C$ in \eqref{HMavarestimate}--\eqref{est:2ndterm}.
 
(ii) Of course, there is no reason for our rate 
$1/{n^{(1-\alpha)/2}}$ to be
sharp (as previously mentioned, for $2/3<\alpha<4/5$, the rate $1/n^{1-3(1-\alpha/2)/2}$ 
is possible).  Also, instead of the choice $v=n^{\alpha}$, a choice such
as $v = h(n)$, for some optimal function $h$ would improve this rate.  
Can we conjecture that the optimal rate in Kolmogorov distance is 
$1/\sqrt n$?    

(iii) From a known duality between the length of a longest common
subsequence of two random words and the length of a shortest
common supersequence (see Dan$\breve{\rm c}$\'ik~\cite{D}), our
result also implies a central limit theorem for this latter case.

\end{remark}

\section{Concluding Remarks}

We conclude the paper with a brief discussion on longest common
subsequences in random permutations and, in a final remark, present
some potential extensions, perspectives and related questions we believe are
of interest.

Theorem~\ref{thm: CLT} shows that the Gaussian distribution appears
as the limiting law for the length of the longest common subsequences of two 
random words.  However, the Tracy-Widom distribution has also been 
hypothesized as the limiting law in sequence comparison 
problems, e.g., \cite{AD}.  It turns out, as 
shown next, that it is indeed the case for certain distributions on
permutations.

First, it is folklore that, if $\pi=(\pi_1,\ldots,\pi_n)$ is any 
element of the symmetric group $\frak S_n$, then
\begin{equation}\label{lislcs1}
LI_n(\pi) = LC_n((1,2,\dots,n),(\pi_1,\pi_2,\dots,\pi_n)),
\end{equation}
where $LI_n(\pi)$ is the length of the longest increasing
subsequence in $\pi=(\pi_1,\dots,\pi_n)$, while
$LC_n((1,2,\ldots,n),(\pi_1,\pi_2,\dots,\pi_n))$, is the length of
the longest common subsequence of the identity permutation $id$ and
of the permutation $\pi$.  In the equality \eqref{lislcs1},
replacing $id$ with an arbitrary permutation $\rho$ and taking for
$\pi$ a uniform random permutation in $\frak S_n$ lead to:

\begin{proposition}
(i) Let $\rho=(\rho_1,\rho_2,\dots,\rho_n)$ be a fixed permutation
in $\frak S_n$ and let $\pi$ be a uniform random permutation in $\frak S_n$.
Then,
\begin{equation}\label{lislcs2}
LI_n(\pi) =_d
LC_n((\rho_1,\rho_2,\ldots,\rho_n),(\pi_1,\pi_2,\dots,\pi_n)),
\end{equation}
where $=_d$ denotes equality in distribution.

(ii) Let $\rho$ and $\pi$ be two independent uniform random
permutations in $\frak S_n$, and let $x \in \mathbb{R}$.  Then,
\begin{equation}\label{eq:distributionstep}
\mathbb{P}(LC_n(\rho,\pi) \leq x)=\mathbb{P}(LI_n(\pi) \leq x).
\end{equation}

\end{proposition}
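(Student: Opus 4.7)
The plan is to reduce both parts to the folklore identity \eqref{lislcs1}, $LI_n(\sigma)=LC_n(id,\sigma)$, via a change of variables on the symmetric group. The key observation for part (i) is that for any two permutations $\rho,\pi\in S_n$, the longest common subsequence of $\rho$ and $\pi$ has the same length as the longest increasing subsequence of the composition $\tau:=\rho^{-1}\circ\pi$, where $\tau_j=\rho^{-1}(\pi_j)$. Indeed, from the definition, $LC_n(\rho,\pi)$ is the largest $k$ for which there exist $i_1<\cdots<i_k$ and $j_1<\cdots<j_k$ with $\rho_{i_s}=\pi_{j_s}$; this is equivalent to $i_s=\tau_{j_s}$, so the condition $i_1<\cdots<i_k$ becomes $\tau_{j_1}<\cdots<\tau_{j_k}$, which is exactly saying that $\tau$ has an increasing subsequence of length $k$. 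Hence
\[
LC_n(\rho,\pi)=LI_n(\tau).
\]

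For part (i), once this identity is established, the argument is immediate: since $\rho$ is fixed and $\pi$ is uniform on $S_n$, the map $\pi\mapsto \rho^{-1}\circ\pi$ is a bijection of $S_n$ preserving the uniform measure, so $\tau$ is uniformly distributed on $S_n$. Thus $LI_n(\tau)=_d LI_n(\pi)$, which combined with the identity above yields \eqref{lislcs2}.

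Part (ii) will then follow by conditioning. For each fixed value of $\rho$, part (i) applied to the (independent) uniform $\pi$ gives
\[
\mathbb{P}(LC_n(\rho,\pi)\le x\mid \rho)=\mathbb{P}(LI_n(\pi)\le x),
\]
an expression that does not depend on the particular value of $\rho$. Integrating out $\rho$ therefore gives \eqref{eq:distributionstep}.

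There is no real obstacle here: the only point that requires a bit of care is the bookkeeping for the composition $\rho^{-1}\circ\pi$ and the verification that uniformity on $S_n$ is preserved under left multiplication by a fixed permutation, both of which are standard. The content of the proposition is essentially that the distribution of the length of the longest common subsequence of two independent uniform permutations depends only on $\pi$ (through $LI_n$), which is what links this problem to the Baik--Deift--Johansson theorem and explains the emergence of the Tracy--Widom law announced in the introduction.
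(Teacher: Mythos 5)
Your proof is correct and takes essentially the same route as the paper: both reduce $LC_n(\rho,\pi)$ to $LI_n$ of a uniform permutation by exploiting the group structure of $S_n$ and the translation-invariance of the uniform measure, and part (ii) is obtained by conditioning on $\rho$ exactly as in the paper. The only difference is one of packaging: you first prove the deterministic identity $LC_n(\rho,\pi)=LI_n(\rho^{-1}\circ\pi)$ (which subsumes \eqref{lislcs1}) and then invoke measure invariance once, whereas the paper chains two equalities in distribution via the composition $\pi\pi'$ and a relabeling; your version is, if anything, slightly cleaner.
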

\begin{proof}
To begin the proof of (i), let $\pi'\in \frak S_n$ be such that
$\pi_i'=\rho_i$. Then, $\pi'':=\pi \pi'$ is still a uniform random
permutation of $[n]$, and so 
\begin{eqnarray*}
&&LC_n((\rho_1,\rho_2,\dots,\rho_n),(\pi_1,\pi_2,\dots,\pi_n)) \\
&&\quad \quad\quad\quad =_d  LC_n((\rho_1,\rho_2,\dots,\rho_n),
(\pi''_1,\pi''_2,\dots,\pi''_n))  \\
&&\quad\quad\quad\quad =LC_n((\rho_1,\rho_2,\dots,\rho_n),
(\pi_{\rho_1},\pi_{\rho_2},\dots,\pi_{\rho_n})),
\end{eqnarray*}
where for the second equality we used $\pi''_{i}= \pi \pi_i'=
\pi_{\rho_i}$.  Clearly,
$$\!LC_n((\rho_1,\rho_2,\dots,\rho_n),(\pi_{\rho_1},
\pi_{\rho_2},\dots,\pi_{\rho_n}))\!=_d\!
LC_n((1,2,\ldots,n),(\pi_1,\pi_2,\dots,\pi_n)),$$ and so
\eqref{lislcs1} finishes the proof of (i).

Now, for (ii), 
\begin{eqnarray*}
\mathbb{P}(LC_n(\rho,\pi) \leq x) &=& \sum_{\gamma \in
S_n} \mathbb{P}(LC_n(\gamma,\pi)
\le x | \rho=\gamma)\mathbb{P}(\rho=\gamma) \\
&=& \frac{1}{n!} \sum_{\gamma \in S_n} \mathbb{P}(LC_n((\gamma_1,\ldots,\gamma_n), 
(\pi_1,\ldots,\pi_n)) \le x) \\
&=& \frac{1}{n!} \sum_{\gamma \in S_n}
\mathbb{P}(LI_n(\pi) \leq x) \\
&=& \mathbb{P}(LI_n(\pi) \leq x), 
\end{eqnarray*}
where the third equality follows from \eqref{lislcs2}.   This proves
(ii).
\end{proof}

Clearly, the identity \eqref{eq:distributionstep}, which, in fact, is
easily seen to remain true if $\rho$ is a random permutation in
$\frak S_n$ with an arbitrary distribution, shows that the probabilistic
behavior of $LC_n(\rho, \pi)$ is identical to the probabilistic
behavior of $LI_n(\pi)$.   Among the many results on $LI_n(\pi)$ presented in
Romik~\cite{Rom}, the mean asymptotic result of Vershik and Kerov~\cite{VK}, and Logan and Shepp \cite{LS} thus implies 
that (is equivalent to): 
$$\lim_{n\to +\infty} \frac{\mathbb{E}LC_n(\rho,\pi)}{2\sqrt n} = 1.$$
Moreover, the distributional asymptotic result of Baik, Deift and
Johansson~\cite{BDJ} implies that (is equivalent to), as $n\to +\infty$,
$$\frac{LC_n(\rho, \pi) - 2 \sqrt{n}}{n^{1/6}}
  \longrightarrow F_2, \qquad \; \text{in distribution,}$$
where $F_2$ is the Tracy-Widom distribution whose cdf is given by
$$F_2(t) = \exp\left(-\int_t^{\infty} (x-t) u^2(x) dx \right),$$ where
$u$ is the solution to the Painlev\'e II equation:
$$u_{xx}=2u^3+xu \qquad \text{with} \qquad u(x) \sim Ai(x)\quad
\text{as} \quad x \rightarrow \infty.$$

To finish, let us list a few venues for future research that we find
of potential interest.

\begin{remark}\label{rmk:concluding}
(i) First, the methods of the present paper can also be used to
study sequence comparison with a general scoring functions $S$.
Namely, $S : \mathcal{A}_m \times \mathcal{A}_m \rightarrow
\mathbb{R}^+$ assigns a score to each pair of letters (the LCS
corresponds to the special case where $S(a,b) =1$ for $a=b$ and
$S(a,b) =0$ for $a \ne b$). This requires more work, but is
possible, and is presented in a separate publication (see
\cite{GHI}), where multiple words are also tackled.  
Such a result requires, at first, 
to use variance estimates, generalizing \cite{HM}, as stated in the
concluding remarks of \cite{HMa} and then to extend to higher
dimensions the closeness to the diagonal results obtained in \cite{HMdiag}.

(ii)  Challenging, is the the loss of independence both between
and inside the sequences and the loss of identical distributions 
both within and between the sequences.  Results for this type of
frameworks will also be presented elsewhere. Already for hidden Markov models (HMM), convergence results, with rates, 
are obtained for $\mathbb{E}LC_n/n$ in \cite{HK1}, while \cite{HK2} 
shows how to transfer iid normal approximation 
results such as Theorem~\ref{chat1} to the HMM case.  

(iii)  It would, similarly, also be of interest to study the random permutations
versions of (i) and (ii) above.  As in the previous section, and as far as the multiple 
sequences framework is concerned,  
the study of the length of the longest common subsequences reduces to the study of the 
length of the longest common and increasing subsequences with one 
less sequence, e.g., see \cite{HX}.  

\end{remark}

\textbf{Acknowledgements.} CH's research was supported in part by a Simons
Foundation Fellowship, grant \#267336 and the grants \#246283 and \#524678 from the
Simons Foundation.  Many thanks to the LPMA
of the Universit\'e Pierre et Marie Curie, to CIMAT and to Bogazici University for their
hospitality while part of this research was carried
out. \"{U}I is grateful to L. Goldstein
for introducing him to Stein's method and, in particular, to
Chatterjee's normal approximation results.  Also, many thanks to the
LPMA of the Universit\'e Pierre et Marie Curie for its hospitality
while part of this research was carried
out as well as to the School of Mathematics of the Georgia 
Institute of Technology while being a Hale postdoctoral Fellow.

Both authors would like to thank
the French Scientific Attach\'es Fabien Agen\`es (Los Angeles) and Nicolas Florsch (Atlanta) 
for their consular help.\! Without them, this research might not
have existed.   Lastly many thanks to Ruoting Gong, George Kerchev, Chen Xu 
and referees for their detailed 
reading and many comments which have led 
to numerous improvements on this manuscript. 
\bigskip

\end{document}